\newtheorem{theorem}{Theorem}[section]
\newtheorem{lemma}[theorem]{Lemma}
\newtheorem{cor}[theorem]{Corollary}
\newtheorem{prop}[theorem]{Proposition}
\theoremstyle{definition}
\newtheorem{definition}[theorem]{Definition}
\theoremstyle{remark}
\newtheorem{remark}[theorem]{Remark}
\numberwithin{equation}{section}
\newcommand{\dom}{\operatorname{Dom}}
\newcommand\CC{\mathbb{C}}
\newcommand\RR{\mathbb{R}}
\newcommand\ZZ{\mathbb{Z}}
\newcommand\NN{\mathbb{N}}
\newcommand\TT{\mathbb{T}}
\newcommand\del{\Delta}
\newcommand\p{\partial}
\newcommand\sml{S^{m}(\ZZ^n\times \TT^n)}
\newcommand\sob{H}
\begin{document}

\title[]{The essential adjointness of pseudo-differential operators on $\mathbb{Z}^n$}
\author{Ognjen Milatovic}
\address{Department of Mathematics and Statistics\\
         University of North Florida   \\
       Jacksonville, FL 32224 \\
        USA
           }
\email{omilatov@unf.edu}

\subjclass[2010]{47G30, 35S05}

\keywords{discrete pseudo-differential operator, discrete Sobolev space, elliptic operator, essentially adjoint pair, self-adjointness}

\begin{abstract}
In the setting of the lattice $\ZZ^n$ we consider a pseudo-differential operator $A$ whose symbol belongs to a class defined on $\ZZ^n\times \TT^n$, where $\TT^n$ is the $n$-torus. We realize $A$ as an operator acting between the discrete Sobolev spaces $H^{s_j}(\mathbb{Z}^n)$, $s_j\in\RR$, $j=1,2$, with the discrete Schwartz space serving as the domain of $A$. We provide a sufficient condition for the essential adjointness of the  pair $(A,\,A^{\dagger})$, where $A^{\dagger}$ is the formal adjoint of $A$.
\end{abstract}

\maketitle

\section{Introduction}\label{S:intro}
Difference equations and the corresponding pseudo-differential operators on the lattice $\ZZ^n$ play an important role in the discretization of continuous problems. In the last fifteen years, a number of researchers have studied various questions concerning these operators; see, for instance, the papers~\cite{CDK-20, CK-19, DW-13, gjbnm-16, Mol-10,Rab-10, Rab-4, Rab-9, Rod-11}. In parallel with these developments, based on the quantization described in~\cite{RT-10}, the authors of~\cite{Kalleji-15,Mol-09,Mol-11,Pirh-11} have investigated operators on the circle $\mathbb{S}^1$ and, more generally, on the $n$-torus $\TT^n$. Furthermore, the authors of~\cite{Cat-18,Cat-20,km-20,km-21-cot} have studied various properties of (generalized) pseudo-differential operators (the so-called $\ZZ$-operators and $S$-operators) related to a finite
measure space $S$ such that $L^2(S)$ is a separable Hilbert space.

A few years ago the authors of~\cite{BRK-20} developed a global symbol calculus for pseudo-differential operators on $\ZZ^n$. In this calculus, $\mathbb{Z}^n\times\TT^n$  plays the role of the phase space, whereby the frequency component belongs to $\TT^n$. As the frequency space $\TT^n$ is compact, there will be no improvement with respect to the decay of the frequency variable if one tries to construct the symbol calculus by mimicking the definition of the standard H\"ormander class on $\RR^n$. In the article~\cite{BRK-20}, the authors overcame this difficulty by using a similar strategy as in~\cite{RT-10} but  with switched roles of the space and frequency variables. It turns out that the symbol class $S^{m}_{\rho,\delta}(\ZZ^n\times\TT^n)$, with $m,\rho,\delta\in\RR$, introduced in~\cite{BRK-20} (see definition~\ref{D-1} below for the special case $\rho=1$ and $\delta=0$), bears some resemblance to that of the so-called $SG$-operators discussed in~\cite{DW-08}. Having developed the appropriate symbol calculus, the authors of~\cite{BRK-20} were able to provide the conditions for the $\ell^2(\ZZ^n)$-boundedness (here, $\ell^2(\ZZ^n)$ denotes the space of square summable complex-valued functions on $\ZZ^n$), compactness, and $H^{s}(\ZZ^n)$-boundedness of the corresponding operators (here, $H^s(\ZZ^n)$ indicates a discrete Sobolev space; see section~\ref{SS:s-2-6} below). Additionally, the paper~\cite{BRK-20} contains, among other things, the proof of weighted apriori estimates for difference equations, the proof of (sharp) G\r{a}rding inequality, and a discussion of the problem of unique solvability of parabolic equations on $\ZZ^n$.

Building on the paper~\cite{BRK-20}, the authors of~\cite{DK-20} studied, among other questions, the relationship between the maximal and minimal realizations in $\ell^2(\ZZ^n)$ of a pseudo-differential operator $A$ whose domain $\dom(A)$ is the Schwartz space $\mathcal{S}(\ZZ^n)$ (see section~\ref{SS:s-2-3} below for the definition of $\mathcal{S}(\ZZ^n)$). Here, in analogy with the definitions in chapter 13 of the book~\cite{W-pdo-book} (in the context of $\RR^n$), the ``minimal operator" $A_{\min}$ refers to the closure of $A|_{\mathcal{S}(\ZZ^n)}$ in $\ell^2(\ZZ^n)$, while the ``maximal operator" $A_{\max}$ is defined as follows: Let $f$ and $u$ be two functions in $\ell^2(\ZZ^n)$. One says that $u\in\dom(A_{\max})$ and $A_{\max}u=f$ if
\begin{equation*}
(u,A^{\dagger}v)=(f,v),
\end{equation*}
for all $v\in \mathcal{S}(\ZZ^n)$, where $(\cdot,\cdot)$ is the inner product in $\ell^2(\ZZ^n)$ and $A^{\dagger}$ is the formal adjoint of $A$ (see~(\ref{E:inner-l-2}) and~(\ref{E:dual-1}) below for the descriptions of $(\cdot,\cdot)$ and $A^{\dagger}$). After demonstrating various  properties of the Sobolev spaces $H^s(\ZZ^n)$ and establishing the appropriate analogue of Agmon–-Douglis–-Nirenberg inequalities, the authors of~\cite{DK-20} proved (in their theorem 3.19) that if $A$ is an elliptic operator whose symbol belongs to the class $S^{m}_{1,0}(\ZZ^n\times\TT^n)$ with $m>0$, then $A_{\min}=A_{\max}$. Subsequent to the paper~\cite{DK-20}, the authors of~\cite{km-21} considered weighted $M^{m}_{\rho,\Lambda}(\ZZ^n\times\mathbb{T}^n)$-type symbols (analogous to the class $M^{m}_{\rho,\Lambda}(\RR^n\times\RR^n)$ in ~\cite{GM-03,GM-05,W-06}), and after developing the corresponding calculus, established, among other things, the coincidence of minimal and maximal operators on $\ell^2(\ZZ^n)$  (assuming $M$-ellipticity of the corresponding symbols).

The question about the equality of maximal and minimal realizations of a pseudo-differential operator $A$ whose symbol belongs to $S^{m}_{1,0}(\ZZ^n\times\TT^n)$, $m\in\RR$, and whose domain is $\dom(A)=\mathcal{S}(\ZZ^n)$ can be rephrased as follows: under what conditions do $A$ and its formal adjoint $A^{\dagger}$ form an ``essentially adjoint pair" in $\ell^2(\ZZ^n)$? In this context, the term ``$(A, A^{\dagger})$ is an essentially adjoint pair" means that the following two conditions are fulfilled: $\widetilde{A}=(A^{\dagger})^{*}$ and $\widetilde{A^{\dagger}}=A^{*}$, where $\widetilde{G}$ stands for the closure and $G^{*}$ denotes the true adjoint (that is, the operator-theoretic adjoint) of an operator $G$.

In our paper we consider a more general version of this question, namely, we allow the operators $A$ (where the symbol of $A$ is in $S^{m}_{1,0}(\ZZ^n\times\TT^n)$, $m\in\RR$) and $A^{\dagger}$ to act between Sobolev spaces:
\begin{equation}\label{E:in-1}
A\colon \dom (A)\subseteq H^{s_1}(\ZZ^n)\to H^{s_2}(\ZZ^n),\quad \dom (A)=\mathcal{S}(\ZZ^n), \quad s_1,s_2\in\RR,
\end{equation}
and
\begin{equation}\label{E:in-2}
A^{\dagger}\colon \dom (A^{\dagger})\subseteq H^{-s_2}(\ZZ^n)\to H^{-s_1}(\ZZ^n),\quad \dom (A^{\dagger})=\mathcal{S}(\ZZ^n).
\end{equation}
As $A$ and $A^{\dagger}$ map $\mathcal{S}(\ZZ^n)$ into $\mathcal{S}(\ZZ^n)$ (see proposition 3.15 in~\cite{DK-20}) and $\mathcal{S}(\ZZ^n)$ is dense in $H^{s}(\ZZ^n)$ for all $s\in\RR$ (see lemma 3.16 in~\cite{DK-20}), we see that these two definitions make sense and that the operators $A$ and $A^{\dagger}$ are densely defined.

In the main result of our article (theorem~\ref{T:main-1}) we show that if $A$ is an elliptic operator of order $m$ and if $m>s_1-s_2$, then $A$ and $A^{\dagger}$ form an essentially adjoint pair. As $A$ and $A^{\dagger}$ act between two (generally different) Hilbert spaces, in definition~\ref{D-5} below we describe the term ``essentially adjoint pair" more broadly than we indicated at end of the third paragraph of this section. Here, it is worth pointing out that in definition~\ref{D-5}, one of the members of the pair acts between the anti-duals of those Hilbert spaces that serve as the ``origin" and the ``final destination" for the action of the other member of the pair. As we showed in the appendix~\ref{S:App}, the mentioned definition is applicable to the context of~(\ref{E:in-1}) and~(\ref{E:in-2}) because the anti-dual of $H^{s}(\ZZ^n)$ can be identified suitably with the space $H^{-s}(\ZZ^n)$. We mention that in the case $s_1=s_2=0$ our theorem~\ref{T:main-1} reduces to theorem 3.19 from~\cite{DK-20}, that is, we recover the result about the equality of minimal and maximal realizations of $A|_{\mathcal{S}(\ZZ^n)}$ in $\ell^2(\ZZ^n)$ (as described earlier in this section). If we assume $s_1=s_2=0$ and $A=A^{\dagger}$ (see corollary~\ref{C:main-1}), we get the essential self-adjointness of $A$ on $\mathcal{S}(\ZZ^n)$.

We note that the special case $s_1=s_2=0$ of our theorem~\ref{T:main-1} can be obtained quickly from the following two observations: (i) by theorem 4.1 in~\cite{BRK-20} our operator $A$ is unitarily equivalent to the formal adjoint of a toroidal pseudo-differential operator $B$ whose symbol $b$ is related to the symbol $a$ of $A$ as follows: $b(x,k)=\overline{a(-k,x)}$, where $x\in \mathbb{T}^n$ and $k\in\ZZ^n$ (here $\TT^n$ is the $n$-torus) and (ii) the minimal and maximal realizations in the space $L^2(\TT^n)$ of an elliptic operator $B|_{C^{\infty}(\TT^n)}$ of order $m>0$  coincide (see theorem 3.5 in~\cite{Pirh-11} for the case $n=1$ and theorem 4.8 in~\cite{Kalleji-15} for the general case $n\in\NN$).  (Although the authors of~\cite{Kalleji-15,Pirh-11}  work in the $L^p$-setting, $1<p<\infty$, in our discussion we stress the Hilbert space case.) However, the general case (see~(\ref{E:in-1}) and~(\ref{E:in-2}) above) of the problem  considered in our article is not covered in~\cite{Kalleji-15,Pirh-11}, where the minimal and maximal realizations are operators $L^2(\TT^n)\to L^2(\TT^n)$ (focusing on the Hilbert space case $p=2$) . Thus, our theorem~\ref{T:main-1} does not follow directly (via unitary equivalence) from the mentioned results of~\cite{Kalleji-15,Pirh-11}.  The fact that the operators $A$ and $A^{\dagger}$ act between different Sobolev spaces as in~(\ref{E:in-1}) and~(\ref{E:in-2}) complicates the situation, requiring a closer look at the Sobolev scale $\{H^s(\ZZ^n)\}_{s\in\RR}$ and modifications of the approach used in~\cite{DK-20,Kalleji-15,Pirh-11}.

We prove theorem~\ref{T:main-1} by employing the symbol calculus from~\cite{BRK-20, DK-20} and by reducing the problem to the question of the essential self-adjointness of an operator acting in (a single) Hilbert space $ H^{s_1}(\ZZ^n)\oplus H^{s_2}(\ZZ^n)$. We accomplish the latter task by using an abstract device from the book~\cite{Sh-pdo-book} (recalled in lemma~\ref{L:abs-pdo-book} below) and tailoring it to our specific problem (see lemma~\ref{L:abs-pdo-book-1} below). Our theorem~\ref{T:main-1} can be viewed as a discrete analogue of the results of~\cite{Bez-77}, where the author considered the essential adjointness of the pair $(A,A^{\dagger})$, with the corresponding operators acting between Sobolev (or Sobolev-like) spaces and having hypoelliptic symbols of the following types: (i) the usual (uniform) H\"ormander class, (ii) the symbol class defined in section IV.23 of the book~\cite{Sh-pdo-book} (see the paper~\cite{Rodino-paper} for a weighted version of this class), and (iii) the (uniform) H\"ormander class with the additional condition that its members are uniformly almost periodic functions with respect to the space variable.

Lastly, we remark that it is possible to formulate an analogue of theorem~\ref{T:main-1} for operators with $M^{m}_{\rho,\Lambda}(\ZZ^n\times\mathbb{T}^n)$-type symbols from~\cite{km-21}. To keep our presentation simpler, we chose to work in the setting
of $S^{m}(\ZZ^n\times\mathbb{T}^n)$-type symbols from~\cite{BRK-20, DK-20}.

Our paper consists of six sections and an appendix. In section~\ref{S:res}, after describing the basic notations and recalling the definitions of the symbol class and the corresponding pseudo-differential operator on $\ZZ^n$, discrete Sobolev spaces, and the notion of an essentially adjoint pair of operators, we state the main result of the paper (theorem~\ref{T:main-1}). Subsequently, in section~\ref{s-pl} we recall the needed elements of the symbol calculus from~\cite{BRK-20}, state and prove a couple of additional properties of elliptic operators on $\ZZ^n$, and recall an abstract device from~\cite{Sh-pdo-book} mentioned in the previous paragraph. Finally, we prove the main result (and its corollary) in sections~\ref{S:pf-1} and~\ref{pf-c-1}. In section~\ref{SS-1-21} we describe two applications of our result: well-posedness of initial-value problems for evolution equations in $\ell^2(\ZZ^n)$ and construction of an extended Hilbert scale on $\ZZ^n$ generated by an elliptic operator. In the appendix~\ref{S:App} we discuss the anti-duality between the discrete Sobolev spaces $H^{s}(\ZZ^n)$ and $H^{-s}(\ZZ^n)$, $s\in\RR$.

\section{Notations and Results}\label{S:res}
\subsection{Basic notations}\label{SS:s-2-1}
In this article, the symbols $\ZZ$, $\NN$, and $\NN_0$ refer to the sets of integers, positive integers, and non-negative integers respectively. For $n\in\NN$, we denote the $n$-dimensional integer lattice by $\ZZ^n$. By an $n$-dimensional multiindex $\alpha$ we mean an element of $\NN_0^{n}$, that is,  $\alpha=(\alpha_1, \alpha_2, \dots, \alpha_n)$ with $\alpha_j\in \NN_0$.  We define $|\alpha|:=\alpha_1+ \alpha_2+ \dots+ \alpha_n$, and $\alpha!:=\alpha_1\alpha_2\dots\alpha_n$. For $k\in \ZZ^n$ and $\alpha\in\NN_0^{n}$, we define
\begin{equation*}
k^{\alpha}:=k_1^{\alpha_1}k_2^{\alpha_2}\dots k_n^{\alpha_n}.
\end{equation*}
and
\begin{equation*}
|k|:=\sqrt{k_1^2+k_2^2+\dots+k_n^2}.
\end{equation*}

By $\{e_j\}_{j=1}^{n}$ we mean a collection of elements $e_j\in \NN_{0}^n$, where
\begin{equation*}
e_j:=(0,0,\dots, 1,0, \dots, 0),
\end{equation*}
with $1$ occupying the $j$-th slot and $0$ occupying the remaining slots.

For a function $u(k_1,k_2,\dots,k_n)$ of the input variable $k=(k_1,k_2,\dots, k_n)\in \ZZ^n$ we define the first partial difference operator $\del_{k_j}$ as
\begin{equation*}
\del_{k_j}u(k):=u(k+e_j)-u(k),
\end{equation*}
where $k+e_j$ is the usual addition of the $n$-tuplets $k$ and $e_j$.
For a multiindex $\alpha\in \NN_0^{n}$ we define
\begin{equation*}
\del^{\alpha}_k:=\del^{\alpha_1}_{k_1}\del^{\alpha_2}_{k_2}\dots \del^{\alpha_n}_{k_n}.
\end{equation*}
We will also need basic differential operators on the $n$-dimensional torus $\mathbb{T}^n:=\RR^n/\ZZ^n$. For $x\in \mathbb{T}^n$ and $\alpha\in \NN^n$ we define
\begin{equation*}
D_{x_j}:=\frac{1}{2\pi i}\frac{\p}{\p{x_j}},\qquad D^{\alpha}_x:=D^{\alpha_1}_{x_1}D^{\alpha_2}_{x_2}\dots D^{\alpha_n}_{x_n},
\end{equation*}
where $i$ is the imaginary unit. Additionally, for $l\in \NN_0$ we define,
\begin{equation*}
D^{(l)}_{x_j}:=\prod_{r=0}^{l-1}\left(\frac{1}{2\pi i}\frac{\p}{\p{x_j}}-r\right),\qquad D^{(0)}_{x_j}:=1,
\end{equation*}
where ``1" refers to the identity operator. For $\alpha\in \NN_0^{n}$, we define
\begin{equation*}
D^{(\alpha)}_{x}:=D^{(\alpha_1)}_{x_1}D^{(\alpha_2)}_{x_2}\dots D^{(\alpha_n)}_{x_n}.
\end{equation*}

\subsection{Symbol classes}\label{SS:s-2-2} We now recall the definition of the symbol class $\sml$ introduced by the authors of~\cite{BRK-20}.
\begin{definition}\label{D-1} For $m\in\RR$, the notation $S^{m}(\ZZ^n\times \TT^n)$ indicates the set of functions $a\colon \ZZ^n\times \TT^n\to\mathbb{C}$ satisfying the following properties:
\begin{enumerate}
\item [(i)] for all $k\in \ZZ^n$, we have $a(k,\cdot)\in C^{\infty}(\TT^n)$;
\item [(ii)] for all $\alpha,\,\beta\in \NN_0^{n}$, there exists a constant $C_{\alpha,\beta}>0$ such that
\begin{equation*}
 |D_{x}^{(\beta)}\Delta^{\alpha}_{k}a(k,x)|\leq C_{\alpha,\beta}(1+|k|)^{m-|\alpha|},
\end{equation*}
for all $(k,x)\in \ZZ^n\times \TT^n$.
\end{enumerate}
\end{definition}

\begin{remark} The class $S^{m}(\ZZ^n\times \TT^n)$ is not contained in the symbol classes introduced in definition 2.1  and definition 4.1 of~\cite{Rab-4}. The symbol class of definition 2.1 (definition 4.1) of~\cite{Rab-4} is designed so that the corresponding operator is bounded in unweighted (suitably weighted) $\ell^p$-spaces on $\ZZ^n$, $1\leq p\leq\infty$. Lastly, we remark that our main result (see theorem~\ref{T:main-1} below) does not follow from the boundedness results of~\cite{Rab-4}.
\end{remark}

We also recall the definition of an elliptic symbol from~\cite{BRK-20}.

\begin{definition}\label{D-2}  For $m\in\RR$, the elliptic symbol class $ES^{m}(\ZZ^n\times \TT^n)$ refers to the set of functions $a\in S^{m}(\ZZ^n\times \TT^n)$  satisfying the following property: there exist constants $C>0$ and $R>0$ such that
\begin{equation*}
|a(k,x)|\geq C(1+|k|)^m,
\end{equation*}
for all $x\in\TT^n$ and all $k\in \ZZ^n$ such that $|k|>R$.
\end{definition}

\subsection{Schwartz space}\label{SS:s-2-3} Before discussing pseudo-differential operators on $\ZZ^n$, we recall the definition of the Schwartz space $\mathcal{S}(\ZZ^n)$ from~\cite{BRK-20} and~\cite{DK-20}: this space consists of the functions $u\colon\ZZ^n\to\CC$ such that for all $\alpha,\beta\in\NN_0^n$ we have
\begin{equation*}
\displaystyle\sup_{k\in\ZZ^n}|k^{\alpha}(\del_{k}^{\beta}u)(k)|<\infty.
\end{equation*}
The symbol $S'(\ZZ^n)$ indicates the space of tempered distributions, that is, continuous linear functionals on $\mathcal{S}(\ZZ^n)$.

\subsection{Discrete Fourier transform}\label{SS:s-2-4}
For $1\leq p<\infty$ we define $\ell^p(\ZZ^n)$ as the space of functions $u\colon \ZZ\to \CC$ such that $\|u\|_{p}<\infty$, where
\begin{equation*}
\|u\|_{p}^{p}:=\sum_{k\in\ZZ^n}|u(k)|^p.
\end{equation*}
In particular for $p=2$ we get a Hilbert space $\ell^2(\ZZ^n)$ with the inner product
\begin{equation}\label{E:inner-l-2}
  (u,v):=\sum_{k\in\ZZ^n}u(k)\overline{v(k)}.
\end{equation}
To simplify the notation we will denote the corresponding norm in $\ell^2(\ZZ^n)$ by $\|\cdot\|$.

For $u\in \ell^1(\ZZ^n)$, its discrete Fourier transform $\widehat{u}(x)$ is a function of $x\in\TT^n$ defined as
\begin{equation*}
\widehat{u}(x):=\sum_{k\in\ZZ^n}e^{-2\pi i k\cdot x}u(k),
\end{equation*}
where $k\cdot x:=k_1x_1+k_2x_2+\dots +k_nx_n$.
It turns out that the discrete Fourier transform can be extended to $\ell^2(\ZZ^n)$, and by normalizing the Haar measure on $\ZZ^n$ and $\TT^n$, the Plancherel formula takes the following form:
\begin{equation*}
\sum_{k\in\ZZ^n}|u(k)|^2=\int_{\TT^n}|\widehat{u}(x)|^2\,dx.
\end{equation*}
We also recall the following inversion formula:
\begin{equation}\label{E:inv-ft}
u(k)=\int_{\TT^n}e^{2\pi i k\cdot x}\widehat{u}(x)\,dx,\qquad k\in\ZZ^n.
\end{equation}

\subsection{Pseudo-differential operator}\label{SS:s-2-5} For a symbol $a\in S^m(\ZZ^n\times \TT^n)$ we define the pseudo-differential operator $T_{a}$ as follows:
\begin{equation}\label{E:op-a}
  (T_{a}u)(k):= \int_{\TT^n} e^{2\pi i k\cdot x}a(k,x)\widehat{u}(x)\,dx,\quad u\in \mathcal{S}(\ZZ^n).
\end{equation}

In proposition 3.15 of~\cite{DK-20}, the authors showed that the operator $T_{a}$ maps $\mathcal{S}(\ZZ^n)$ into $\mathcal{S}(\ZZ^n)$, and by a small modification of this argument, one can show that $T_{a}\colon \mathcal{S}(\ZZ^n)\to \mathcal{S}(\ZZ^n)$ is continuous.

For a continuous linear operator $P\colon \mathcal{S}(\ZZ^n)\to  \mathcal{S}(\ZZ^n)$, its  \emph{formal adjoint} $P^{\dagger}$ is defined using the following relation:
\begin{equation}\label{E:dual-1}
  (Pu,v)=(u, P^{\dagger} v),
\end{equation}
for all $u,\,v\in \mathcal{S}(\ZZ^n)$, where $(\cdot,\cdot)$ is as in~(\ref{E:inner-l-2}).

The operator $T_{a}\colon \mathcal{S}(\ZZ^n)\to \mathcal{S}(\ZZ^n)$ extends to a continuous linear operator
\begin{equation*}
T_{a}\colon S'(\ZZ^n)\to S'(\ZZ^n)
\end{equation*}
defined as follows:
\begin{equation*}
(T_{a}F)(\overline{u}):=F\left(\overline{T_{a}^{\dagger}u}\right), \qquad F\in \mathcal{S}'(\ZZ^n),\,\,u\in \mathcal{S}(\ZZ^n),
\end{equation*}
where $T_{a}^{\dagger}$ is the formal adjoint of $T_{a}$ and $\overline{z}$ is the conjugate of $z\in\CC$.

\subsection{Sobolev spaces}\label{SS:s-2-6} To formulate the main result we will need the discrete Sobolev spaces, as defined in~\cite{BRK-20} and~\cite{DK-20}. For $s\in\RR$ define
\begin{equation}\label{sob-mult}
\Lambda_s(k):=(1+|k|^2)^{\frac{s}{2}}, \qquad k\in\ZZ^n.
\end{equation}
Looking at the definition~\ref{D-1}, it can be checked $\Lambda_{s}\in ES^s(\ZZ^n\times \TT^n)$. 

Next, for $s\in\RR$ we define
\begin{equation}\label{E:def-sob}
  H^{s}(\ZZ^n):=\{u\in \mathcal{S}'(\ZZ^n)\colon T_{\Lambda_{s}}u\in \ell^2(\ZZ^n)\}
\end{equation}
with the norm $\|u\|_{H^s}:=\|T_{\Lambda_{s}}u\|$, where $\|\cdot\|$ is the norm corresponding to the inner product~(\ref{E:inner-l-2}) in $\ell^2(\ZZ^n)$.

We observe that~(\ref{E:def-sob}) are, in fact, polynomially weighted sequence spaces. An important property, established in lemma 3.16 of~\cite{DK-20}, is the density of the space $\mathcal{S}(\ZZ^n)$ in $H^{s}(\ZZ^n)$ for all $s\in\RR$.

We will also use the following two notations:
\begin{equation*}
H^{-\infty}(\ZZ^n):=\displaystyle\cup_{s\in\RR}H^{s}(\ZZ^n), \qquad H^{\infty}(\ZZ^n):=\displaystyle\cap_{s\in\RR}H^{s}(\ZZ^n).
\end{equation*}

As shown in lemma~\ref{L:pairing-sob} in the appendix, the inner product~(\ref{E:inner-l-2}), considered as a sesquilinear form on $\mathcal{S}(\ZZ^n)\times \mathcal{S}(\ZZ^n)$, extends to a continuous sesquilinear pairing
\begin{equation}\label{E:pair-1}
 (\cdot,\cdot)\colon H^{s}(\ZZ^n)\times  H^{-s}(\ZZ^n)\to\CC,
\end{equation}
for all $s\in\RR$. Relative to this pairing, the spaces $H^{s}(\ZZ^n)$ and $H^{-s}(\ZZ^n)$ are anti-dual to each other.


\subsection{Operators acting between Sobolev spaces}\label{SS:s-2-7}

We are now ready to describe the main operators studied in this paper. For $s_1,\,s_2, m\in\RR$ and a symbol $a\in S^{m}(\ZZ^n\times \TT^n)$ we define $A\colon \dom (A)\subseteq H^{s_1}(\ZZ^n)\to H^{s_2}(\ZZ^n)$ as follows:
\begin{equation}\label{E:op-1}
Au:=T_{a}u,\quad \dom (A)=\mathcal{S}(\ZZ^n).
\end{equation}
Next, we define $A^{\dagger}\colon \dom (A^{\dagger})\subseteq H^{-s_2}(\ZZ^n)\to H^{-s_1}(\ZZ^n)$ as follows:
\begin{equation}\label{E:op-2}
A^{\dagger}u:=(T_{a})^{\dagger}u,\quad \dom (A^{\dagger})=\mathcal{S}(\ZZ^n).
\end{equation}
The operators $A$ and $A^{\dagger}$ make sense because $T_{a}$ and $(T_{a})^{\dagger}$ map $\mathcal{S}(\ZZ^n)$ into $\mathcal{S}(\ZZ^n)$, and $\mathcal{S}(\ZZ^n)\subseteq H^{s}(\ZZ^n)$ for all $s\in\RR$. Furthermore, the operators are densely defined since $\mathcal{S}(\ZZ^n)$ is dense in $H^{s}(\ZZ^n)$ for all $s\in\RR$.

\subsection{Some abstract concepts}\label{SS:2-6} Before stating the main theorem, let us recall some abstract notions. Let $\mathscr{H}_1$ and $\mathscr{H}_2$ be Hilbert spaces with their anti-dual spaces $(\overline{\mathscr{H}_1})^{'}$ and $(\overline{\mathscr{H}_2})^{'}$ (here, by ``the anti-dual space to $\mathscr{H}_j$'' we mean the space of continuous anti-linear functionals on $\mathscr{H}_j$). Let $\langle f,u\rangle_{d_j}$ denote the action of a functional $f\in(\overline{\mathscr{H}_{j}})^{'}$ on a vector $u\in \mathscr{H}_j$, $j=1,2$.
\begin{definition}\label{D-f-5}
We say that the (densely defined) operators $Y\colon \mathscr{H}_1\to \mathscr{H}_2$ and $Z\colon (\overline{\mathscr{H}_2})^{'}\to (\overline{\mathscr{H}_1})^{'}$ constitute a formally adjoint pair if
\begin{equation*}
  \langle f,Yu\rangle_{d_2}=\langle Zf,u\rangle_{d_1}
\end{equation*}
for all $u\in\dom(Y)\subseteq \mathscr{H}_1$ and $f\in \dom(Z)\subseteq (\overline{\mathscr{H}_2})^{'}$.
\end{definition}
\begin{remark}\label{R:closable-rem}
From the definition it follows that the operators $Y$ and $Z$ are closable.
\end{remark}

In what follows, the notation
\begin{equation}\label{E:def-true-adj}
G^{'}\colon (\overline{\mathscr{H}_2})^{'}\to (\overline{\mathscr{H}_1})^{'}
\end{equation}
refers to the true adjoint (that is, the adjoint in the operator-theoretic sense) of an operator $G\colon \mathscr{H}_1\to \mathscr{H}_2$.

Additionally, $\widetilde {S}$ stands for the closure of a (closable) operator $S$.

\begin{definition}\label{D-5}
We say that (densely defined) operators
\begin{equation}\label{E:pair-operator}
Y\colon \mathscr{H}_1\to \mathscr{H}_2, \qquad Z\colon (\overline{\mathscr{H}_2})^{'}\to (\overline{\mathscr{H}_1})^{'}
\end{equation}
form an adjoint pair if $Y=Z^{'}$ and $Z=Y^{'}$. Similarly, we say that (densely defined) operators $Y$ and $Z$ in~(\ref{E:pair-operator}) form an essentially adjoint pair if $\widetilde{Y}=Z^{'}$ and $\widetilde{Z}=Y^{'}$.
\end{definition}

\subsection{Statement of the result}\label{SS:2-7} In this section, we state the main result of our article.

\begin{theorem}\label{T:main-1} Let $s_1,\,s_2\in \RR$. Assume that $a\in ES^m(\ZZ^n\times \TT^n)$, where $m\in \RR$ satisfies $m>s_1-s_2$. Let $A$ and $A^{\dagger}$ be as in~(\ref{E:op-1}) and~(\ref{E:op-2}) respectively. Then, the operators $A$ and $A^{\dagger}$ form an essentially adjoint pair.
\end{theorem}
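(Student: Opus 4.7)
My plan is to follow the outline sketched in the introduction: reduce the essential adjointness of the pair $(A,A^{\dagger})$ to the essential self-adjointness of a single operator $\mathcal{A}$ on a direct-sum Hilbert space, and then establish the latter via elliptic regularity from the parametrix calculus of~\cite{BRK-20, DK-20}. The easy half of the reduction is that $A$ and $A^{\dagger}$ form a formally adjoint pair in the sense of definition~\ref{D-f-5} (using the extended pairing of lemma~\ref{L:pairing-sob}), which by remark~\ref{R:closable-rem} gives closability and the trivial inclusions $\widetilde{A}\subseteq (A^{\dagger})'$ and $\widetilde{A^{\dagger}}\subseteq A'$. The content of the theorem is therefore the two reverse inclusions.

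Using the anti-duality $(H^{-s})'\cong H^{s}$ from the appendix, the operator $A^{\dagger}\colon \mathcal{S}(\ZZ^n)\subseteq H^{-s_2}\to H^{-s_1}$ has a conjugate-linear incarnation $\widehat{A}\colon \mathcal{S}(\ZZ^n)\subseteq H^{s_2}\to H^{s_1}$, uniquely characterized by the relation $(T_{a}u, v)=(u, \widehat{A}v)$ for $u,v\in \mathcal{S}(\ZZ^n)$. On the direct sum $\mathcal{H}:=H^{s_1}(\ZZ^n)\oplus H^{s_2}(\ZZ^n)$, equipped with the natural inner product (using the isometric Sobolev identification via $T_{\Lambda_s}$), one then assembles
\[
\mathcal{A}(u_1,u_2):=(\widehat{A}u_2,\, A u_1),\qquad \dom(\mathcal{A})=\mathcal{S}(\ZZ^n)\oplus \mathcal{S}(\ZZ^n),
\]
which is symmetric in $\mathcal{H}$. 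The abstract device from Shubin (lemma~\ref{L:abs-pdo-book}), together with the tailored variant (lemma~\ref{L:abs-pdo-book-1}), translates the essential self-adjointness of $\mathcal{A}$ on its domain into precisely the two equalities $\widetilde{A}=(A^{\dagger})'$ and $\widetilde{A^{\dagger}}=A'$ required by definition~\ref{D-5}.

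The analytic crux is then elliptic regularity. Since $a\in ES^m(\ZZ^n\times \TT^n)$, the calculus of~\cite{BRK-20, DK-20} produces a parametrix $B=T_b$ with $b\in S^{-m}(\ZZ^n\times \TT^n)$ satisfying $BT_a=I+K$ and $T_aB=I+K'$ modulo operators with symbols in $S^{-\infty}$. From this one extracts the regularity statement: if $u\in H^{-\infty}(\ZZ^n)$ and $T_au\in H^{t}(\ZZ^n)$ for some $t\in\RR$, then $u\in H^{t+m}(\ZZ^n)$. Applied to $u\in \dom((A^{\dagger})')\subseteq H^{s_1}(\ZZ^n)$ with $(A^{\dagger})'u=w\in H^{s_2}(\ZZ^n)$ -- which, unpacked via the pairing, means $T_au=w$ in the distributional sense -- this upgrades $u$ to $H^{s_2+m}(\ZZ^n)$. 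The hypothesis $m>s_1-s_2$ enters exactly here: it gives $s_2+m>s_1$, so $u$ lies \emph{strictly} above the ambient Hilbert regularity $s_1$. Density of $\mathcal{S}(\ZZ^n)$ in $H^{s_2+m}$ combined with continuity of $T_a\colon H^{s_2+m}\to H^{s_2}$ yields a sequence $\{u_n\}\subset \mathcal{S}(\ZZ^n)$ with $u_n\to u$ in $H^{s_1}$ and $Au_n\to w$ in $H^{s_2}$, placing $u$ in $\dom(\widetilde{A})$. Since $A^{\dagger}$ again has elliptic principal symbol $\overline{a}$ of order $m$, the same parametrix argument handles the symmetric inclusion $A'\subseteq \widetilde{A^{\dagger}}$.

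The main obstacle I anticipate is the bookkeeping around the reduction itself: specifying the inner product on $\mathcal{H}$ and the identification $A^{\dagger}\leftrightarrow \widehat{A}$ so that $\mathcal{A}$ is genuinely symmetric, and verifying that Shubin's abstract essential self-adjointness criterion transfers back precisely to the definition~\ref{D-5} of an essentially adjoint pair (rather than to some weaker or stronger statement). The parametrix construction, the continuity between Sobolev spaces, and the density of $\mathcal{S}(\ZZ^n)$ are by contrast all off-the-shelf consequences of the calculus established in~\cite{BRK-20, DK-20}, and the hypothesis $m>s_1-s_2$ is used at exactly one point -- to convert the elliptic-regularity gain into a strict improvement over the ambient Sobolev level, enabling the final approximation step.
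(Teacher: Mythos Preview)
Your argument is correct, but it is \emph{not} the paper's route, and in fact your proposal contains two largely independent proofs, only one of which you actually carry out.

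The paper genuinely uses the Shubin reduction: it forms $\mathcal{T}$ on $H^{s_1}\oplus H^{s_2}$ with $P=A$, $Q=T_{\Lambda_{-2s_1}}(T_a)^{\dagger}T_{\Lambda_{2s_2}}$, and then proves essential self-adjointness of $\mathcal{T}$ via the deficiency-index criterion $\ker(\mathcal{T}'\pm i)=0$. For $u=(u_1,u_2)$ in this kernel one eliminates $u_1$ to obtain $(PQ+I)u_2=0$; here $PQ\in\textrm{Op}[ES^{2m+2s_2-2s_1}]$, and the hypothesis $m>s_1-s_2$ is used to make $2m+2s_2-2s_1>0$, so that adding $I\in\textrm{Op}[S^0]$ preserves ellipticity. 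Elliptic regularity then forces $u\in H^{\infty}\oplus H^{\infty}$, and a density-plus-symmetry computation shows $u=0$.

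You, by contrast, set up $\mathcal{A}$ but never touch the deficiency indices; instead your third paragraph proves the inclusions $(A^{\dagger})'\subseteq\widetilde{A}$ and $A'\subseteq\widetilde{A^{\dagger}}$ directly. Given $u\in\dom((A^{\dagger})')$ you recognise $T_a u\in H^{s_2}$ distributionally, invoke elliptic regularity once to place $u\in H^{s_2+m}$, and use $m>s_1-s_2$ (so $s_2+m>s_1$) together with density of $\mathcal{S}(\ZZ^n)$ in $H^{s_2+m}$ and continuity of $T_a\colon H^{s_2+m}\to H^{s_2}$ to approximate. The mirror inclusion follows since, by lemma~\ref{L-4-hyp}(ii), the symbol of $(T_a)^{\dagger}$ is again in $ES^m$. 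This is a complete and more elementary argument: it bypasses the matrix operator, the deficiency criterion, and the ellipticity of the \emph{composed} operator $PQ+I$ entirely. Your first two paragraphs (the construction of $\mathcal{A}$ and the appeal to lemmas~\ref{L:abs-pdo-book} and~\ref{L:abs-pdo-book-1}) are vestigial and can simply be deleted; the identification of $\widehat{A}$ you sketch there is in any case imprecise, since the characterisation $(T_a u,v)=(u,\widehat{A}v)$ via the $\ell^2$-pairing yields $T_a^{\dagger}$, not the Sobolev-conjugated $E_1 A^{\dagger}E_2^{-1}$ that symmetry of $\mathcal{A}$ in $H^{s_1}\oplus H^{s_2}$ would require.
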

\begin{remark} The special case $s_1=s_2=0$ of theorem~\ref{T:main-1} can be obtained quickly by using unitary equivalence of $A$ with an elliptic pseudo-differential operator of order $m>0$ on the torus $\TT^n$ and appealing to theorem 3.5 of~\cite{Pirh-11} for $n=1$ and theorem 4.8 of~\cite{Kalleji-15} for $n\in\NN$. However, the general case of theorem~\ref{T:main-1} does not follow directly via unitary equivalence from the mentioned results of~\cite{Kalleji-15,Pirh-11}. For more details, see the fourth-to-last paragraph of section~\ref{S:intro} in our article.
\end{remark}

As a consequence of theorem~\ref{T:main-1}, we get the following corollary:
\begin{cor}\label{C:main-1} Assume that $a\in ES^m(\ZZ^n\times \TT^n)$ with $m>0$. Let $A$ and $A^{\dagger}$ be as in~(\ref{E:op-1}) and~(\ref{E:op-2}) respectively, in both definitions with $s_1=s_2=0$. Assume that $A$ is formally self-adjoint, that is, assume that $A=A^{\dagger}$. Then $A$, as an operator in $\ell^2(\ZZ^n)$, is essentially self-adjoint on $\mathcal{S}(\ZZ^n)$.
\end{cor}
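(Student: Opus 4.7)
The plan is to obtain this corollary by applying Theorem~\ref{T:main-1} with the specialization $s_1=s_2=0$. With this choice, $H^{s_1}(\ZZ^n)=H^{s_2}(\ZZ^n)=\ell^2(\ZZ^n)$, and the standing hypothesis $m>0$ coincides exactly with the requirement $m>s_1-s_2$ of the theorem. Hence Theorem~\ref{T:main-1} applies and asserts that $A$ and $A^{\dagger}$ form an essentially adjoint pair in the sense of Definition~\ref{D-5}; that is,
\begin{equation*}
\widetilde{A}=(A^{\dagger})'\qquad\text{and}\qquad \widetilde{A^{\dagger}}=A',
\end{equation*}
where the true adjoints are taken with respect to the anti-dual pairing between $\ell^2(\ZZ^n)$ and $(\overline{\ell^2(\ZZ^n)})'$.

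Next, I would translate these relations into the standard Hilbert-space framework in $\ell^2(\ZZ^n)$. By Lemma~\ref{L:pairing-sob} in the appendix with $s=0$, the continuous sesquilinear pairing~(\ref{E:pair-1}) coincides with the $\ell^2(\ZZ^n)$-inner product~(\ref{E:inner-l-2}), and the map $u\mapsto (u,\cdot)$ is a canonical linear isomorphism of $\ell^2(\ZZ^n)$ onto its anti-dual $(\overline{\ell^2(\ZZ^n)})'$. Under this identification the abstract true adjoint of Definition~\ref{D-5} coincides with the ordinary Hilbert-space adjoint in $\ell^2(\ZZ^n)$, since both adjoints are characterized by the same duality pairing and the same domain condition.

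Finally, I would invoke the formal self-adjointness hypothesis $A=A^{\dagger}$. Substituting it into $\widetilde{A}=(A^{\dagger})'$ yields $\widetilde{A}=A'$, which is precisely the definition of essential self-adjointness of $A$ on $\mathcal{S}(\ZZ^n)$ as an operator in $\ell^2(\ZZ^n)$. The only step requiring any real care, as opposed to a direct citation, is the identification in the second paragraph of the anti-dual adjoint with the standard $\ell^2(\ZZ^n)$-adjoint; this reduces to checking that the canonical isomorphism of $(\overline{\ell^2(\ZZ^n)})'$ with $\ell^2(\ZZ^n)$ respects both the pairing and the domain condition used to define each adjoint, which is essentially tautological once the $s=0$ case of the appendix is invoked.
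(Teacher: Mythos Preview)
Your proposal is correct and follows essentially the same approach as the paper: specialize Theorem~\ref{T:main-1} to $s_1=s_2=0$, read off $\widetilde{A}=(A^{\dagger})'$, and use $A=A^{\dagger}$ to conclude $\widetilde{A}=A'$. Your extra paragraph identifying the anti-dual adjoint with the ordinary $\ell^2$-adjoint via the $s=0$ case of Lemma~\ref{L:pairing-sob} is a harmless (and arguably welcome) clarification that the paper leaves implicit.
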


\section{Preliminary Lemmas}\label{s-pl}

We start this section by recalling the symbol calculus for $S^{m}(\ZZ^n\times \TT^n)$ developed by the authors of~\cite{BRK-20}.

\subsection{Symbol calculus for $S^{m}(\ZZ^n\times \TT^n)$.}

The following properties of the class $S^{m}(\ZZ^n\times \TT^n)$ follow easily from definition~\ref{D-1}:

\begin{lemma}\label{L-1} For $m_1,\,m_2\in\RR$, we have
\begin{enumerate}
\item [(i)] If $m_1\leq m_2$, then  $S^{m_1}(\ZZ^n\times \TT^n)\subseteq S^{m_2}(\ZZ^n\times \TT^n)$;
\item[(ii)] If $a\in S^{m_1}(\ZZ^n\times \TT^n)$  and $b\in S^{m_2}(\ZZ^n\times \TT^n)$, then $ab\in S^{m_1+m_2}(\ZZ^n\times \TT^n)$;
\item[(iii)] If $a\in  S^{m_1}(\ZZ^n\times \TT^n)$ then $D_{x}^{(\beta)}\Delta^{\alpha}_{k}a(k,x)\in S^{m_1-|\alpha|}(\ZZ^n\times \TT^n)$, for all $\alpha,\,\beta\in \NN_0^{n}$.
\end{enumerate}
\end{lemma}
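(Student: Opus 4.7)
My plan is to verify each of (i), (ii), (iii) directly from the seminorm estimates in Definition~\ref{D-1}; the authors themselves flag the lemma as routine, so the task is essentially bookkeeping.

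Part (i) is immediate: since $1+|k|\geq 1$ for every $k\in\ZZ^n$, the hypothesis $m_1\leq m_2$ gives $(1+|k|)^{m_1-|\alpha|}\leq (1+|k|)^{m_2-|\alpha|}$ for each multiindex $\alpha$, so the constants $C_{\alpha,\beta}$ witnessing $a\in S^{m_1}(\ZZ^n\times\TT^n)$ serve unchanged as constants witnessing $a\in S^{m_2}(\ZZ^n\times\TT^n)$.

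For (ii), the plan is to expand $D_x^{(\beta)}\del_k^{\alpha}(ab)$ by two Leibniz-type identities. In the $x$ variable, each operator $D_{x_j}^{(l)}=\prod_{r=0}^{l-1}(D_{x_j}-r)$ is a polynomial of degree $l$ in the usual $D_{x_j}$ with leading term $D_{x_j}^{l}$, so the families $\{D_x^{(\beta)}\}$ and $\{D_x^{\beta}\}$ are related by an invertible upper-triangular transformation; combined with the standard Leibniz rule for partial derivatives, this writes $D_x^{(\beta)}(ab)$ as a finite linear combination of products $(D_x^{(\beta_1)}a)(D_x^{(\beta_2)}b)$ with $|\beta_1|+|\beta_2|\leq |\beta|$. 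In the $k$ variable I would iterate the discrete Leibniz identity
\begin{equation*}
\del_{k_j}(fg)(k)=(\del_{k_j}f)(k)\,g(k+e_j)+f(k)\,(\del_{k_j}g)(k)
\end{equation*}
to express $\del_k^{\alpha}(ab)$ as a finite sum of terms of the form $(\del_k^{\alpha_1}a)(k+\gamma_1)\cdot(\del_k^{\alpha_2}b)(k+\gamma_2)$ with $\alpha_1+\alpha_2=\alpha$ and shifts $|\gamma_j|\leq|\alpha|$. The seminorm estimates for $a$ and $b$ bound each such term by $C(1+|k+\gamma_1|)^{m_1-|\alpha_1|}(1+|k+\gamma_2|)^{m_2-|\alpha_2|}$, and the elementary comparison $(1+|\gamma|)^{-1}(1+|k|)\leq 1+|k+\gamma|\leq(1+|\gamma|)(1+|k|)$ absorbs the shifts into the constant, producing the desired estimate of order $(1+|k|)^{m_1+m_2-|\alpha|}$.

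For (iii), set $\tilde a:=D_x^{(\beta_0)}\del_k^{\alpha_0}a$ and examine $D_x^{(\beta)}\del_k^{\alpha}\tilde a$. Because $D_x$ and $\del_k$ act on disjoint variables they commute, and the $\del_{k_j}$ commute pairwise, so $\del_k^{\alpha}\del_k^{\alpha_0}a=\del_k^{\alpha+\alpha_0}a$. Moreover, $D_x^{(\beta)}D_x^{(\beta_0)}$ is a polynomial of degree at most $|\beta|+|\beta_0|$ in $D_x$, hence a linear combination of operators $D_x^{(\beta')}$ with $|\beta'|\leq|\beta|+|\beta_0|$; consequently $D_x^{(\beta)}\del_k^{\alpha}\tilde a$ is a finite linear combination of terms $D_x^{(\beta')}\del_k^{\alpha+\alpha_0}a$, each bounded via the seminorm estimates for $a$ by $C(1+|k|)^{m_1-|\alpha+\alpha_0|}=C(1+|k|)^{(m_1-|\alpha_0|)-|\alpha|}$, as required. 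The only step demanding any real care is the discrete Leibniz bookkeeping in (ii), but since the shifts $\gamma_j$ are universally controlled by $|\alpha|$, they contribute only an $|\alpha|$-dependent multiplicative factor and do not alter the order of decay in $k$.
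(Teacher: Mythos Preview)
Your proposal is correct and follows exactly the approach the paper implicitly has in mind: the authors do not give a proof at all, merely stating that the properties ``follow easily from definition~\ref{D-1},'' and your argument supplies the routine bookkeeping (monotonicity in $m$, discrete and continuous Leibniz rules with the shift-absorption via Peetre-type inequalities, and commutation of $D_x$ with $\del_k$) that makes this precise.
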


For the proof of the asymptotic sum formula given below, see lemma 3.4 in~\cite{BRK-20}.

\begin{lemma}\label{L-2} Let $\{m_j\}_{j\in \NN}$ be a sequence of real numbers satisfying the following two properties: $m_j>m_{j+1}$ for all $j$  and $\displaystyle\lim_{j\to\infty}m_j=-\infty$. Additionally, let $\{a_j\}_{j\in \NN}$  be a sequence of symbols such that $a_j\in  S^{m_j}(\ZZ^n\times \TT^n)$. Then, there exists a symbol $a\in  S^{m_1}(\ZZ^n\times \TT^n)$ such that
\begin{equation*}
  a(k,x)\sim \displaystyle\sum_{j}a_j(k,x),
\end{equation*}
where the asymptotic relation is understood in the following sense:
\begin{equation*}
\left[a(k,x)-\displaystyle \sum_{j<N}a_j(k,x)\right]\in S^{m_{N}}(\ZZ^n\times \TT^n),
\end{equation*}
for all $N\in\NN$.
\end{lemma}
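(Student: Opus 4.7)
The plan is to mimic the classical Borel summation argument from the H\"ormander calculus, now with cutoffs placed in the lattice variable $k$, since that is the variable controlling decay in the class $S^m(\ZZ^n\times\TT^n)$. I would fix once and for all a smooth auxiliary function $\psi\colon\RR\to[0,1]$ with $\psi(t)=0$ for $t\le 1$ and $\psi(t)=1$ for $t\ge 2$, and for each $R>0$ set $\chi_R(k):=\psi(|k|/R)$, $k\in\ZZ^n$. For a rapidly increasing sequence $R_j\uparrow\infty$ to be chosen below, I would define
\begin{equation*}
 a(k,x):=\sum_{j=1}^{\infty}\chi_{R_j}(k)\,a_j(k,x).
\end{equation*}
For each fixed $k$ only finitely many summands are nonzero (those $j$ with $R_j<|k|$), so the series is pointwise a finite sum and $a(k,\cdot)\in C^{\infty}(\TT^n)$ automatically.

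The core step is a diagonal choice of $R_j$. Using the discrete Leibniz rule
\begin{equation*}
 \Delta_{k_i}(uv)(k)=(\Delta_{k_i}u)(k)\,v(k+e_i)+u(k)(\Delta_{k_i}v)(k),
\end{equation*}
iterated in $\alpha$, combined with the elementary bound $|\Delta^{\alpha}_{k}\chi_{R}(k)|\le C_{\alpha}R^{-|\alpha|}$ (obtained by Taylor-expanding the smooth map $t\mapsto\psi(|k+te_{i}|/R)$) and the uniform comparability $(1+|k+e|)\asymp(1+|k|)$ for bounded $|e|$, one shows that for each $j\ge 2$ and each $(\alpha,\beta)$,
\begin{equation*}
 \bigl|D_{x}^{(\beta)}\Delta_{k}^{\alpha}[\chi_{R_j}\,a_j](k,x)\bigr|\le C_{\alpha,\beta,j}\,(1+R_j)^{m_j-m_1}(1+|k|)^{m_1-|\alpha|},
\end{equation*}
where the decay factor $(1+R_j)^{m_j-m_1}$ tends to $0$ as $R_j\to\infty$ (because $m_j<m_1$). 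A diagonal choice of $R_j$, large enough that $C_{\alpha,\beta,j}(1+R_j)^{m_j-m_1}\le 2^{-j}$ for all $|\alpha|+|\beta|\le j$, then makes the series defining $D_x^{(\beta)}\Delta_k^{\alpha}a$ converge with the $S^{m_1}$-seminorm estimate, so that $a\in S^{m_1}(\ZZ^n\times\TT^n)$.

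To verify the asymptotic expansion, for every $N\in\NN$ I would split
\begin{equation*}
 a(k,x)-\sum_{j<N}a_j(k,x)=\sum_{j<N}(\chi_{R_j}(k)-1)\,a_j(k,x)+\sum_{j\ge N}\chi_{R_j}(k)\,a_j(k,x).
\end{equation*}
Each term of the first (finite) sum has $k$-support in $\{|k|\le 2R_j\}$ and hence lies in $S^{-\infty}\subseteq S^{m_N}$. Each term of the tail belongs to $S^{m_j}\subseteq S^{m_N}$ by lemma~\ref{L-1}(i), and the same diagonal choice of $R_j$ already made ensures convergence of the corresponding $S^{m_N}$-seminorm series, placing the tail in $S^{m_N}(\ZZ^n\times\TT^n)$.

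The main obstacle I expect is the bookkeeping in the discrete Leibniz expansion: each application of $\Delta_{k_i}$ to a product shifts one of the factors by $e_i$, so iterating produces a sum of products of $\Delta^{\gamma}\chi_{R_j}$ and $\Delta^{\alpha-\gamma}a_j$ evaluated at various lattice translates of $k$. One must invoke $(1+|k+e|)\asymp(1+|k|)$ uniformly for $|e|\le|\alpha|$ in order to convert the shifted bounds back into unshifted ones before summing the Leibniz expansion, and must check that the support-based comparison $R_j\asymp (1+|k|)$ on the transition annulus of $\chi_{R_j}$ indeed absorbs the difference operators acting on the cutoff. Once this combinatorial step is cleanly handled, the remainder is a direct discrete transcription of the classical Borel summation technique.
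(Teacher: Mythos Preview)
The paper does not prove this lemma itself; it simply cites lemma~3.4 of~\cite{BRK-20}. Your Borel-type construction with smooth cutoffs in the lattice variable $k$ is precisely the standard argument used there (and in the classical $\RR^n$ setting), and your outline is essentially correct.

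One small refinement is worth flagging. As written, your diagonal choice of $R_j$ imposes only $C_{\alpha,\beta,j}(1+R_j)^{m_j-m_1}\le 2^{-j}$, i.e.\ it controls the $S^{m_1}$-seminorms of $\chi_{R_j}a_j$. That alone does not immediately give summability of the $S^{m_N}$-seminorms of the tail $\sum_{j\ge N}\chi_{R_j}a_j$ for every $N$. The standard fix is to strengthen the diagonal choice: for each $j\ge 2$ require
\[
C_{\alpha,\beta,j}\,(1+R_j)^{m_j-m_N}\le 2^{-j}\qquad\text{for all }N<j\text{ and all }|\alpha|+|\beta|\le j,
\]
which is still only finitely many conditions per $j$, each satisfied for $R_j$ large enough since $m_j<m_N$ for $N<j$. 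With this adjustment (and the discrete-Leibniz bookkeeping you already identified) your argument goes through verbatim.
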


We now recall the composition rule and the adjoint rule from theorems 3.1 and~3.2 in~\cite{BRK-20}, noting that the ordering of the difference (or differential) operators in the composition rule is different from that of the analogous formula for the $\RR^n$-setting.

\begin{prop}\label{L-3} Assume that $a\in S^{m_1}(\ZZ^n\times \TT^n)$ and $b\in S^{m_2}(\ZZ^n\times \TT^n)$, with $m_1,\,m_2\in\RR$. Let $T_a=\textrm{Op}[a]$ and $T_b=\textrm{Op}[b]$ be as in~(\ref{E:op-a}). Then, the following properties hold:
\begin{enumerate}
  \item [(i)] $T_aT_b=\textrm{Op}[c]$ with $c\in S^{m_1+m_2}(\ZZ^n\times \TT^n)$, and
\begin{equation}\label{E:ab-exp}
c(k,x)\sim \displaystyle\sum_{\alpha} \frac{1}{\alpha !}D_{x}^{(\alpha)}a(k,x)\Delta_{k}^{\alpha}b(k,x),
\end{equation}
where the asymptotic relation is interpreted as
\begin{equation*}
\left[c(k,x)-\displaystyle \sum_{|\alpha|<N}\frac{1}{\alpha !}D_{x}^{(\alpha)}a(k,x)\Delta_{k}^{\alpha}b(k,x)\right]\in S^{m_1+m_2-N}(\ZZ^n\times \TT^n),
\end{equation*}
for all $N\in \NN$.
  \item [(ii)] $(T_a)^{\dagger}=\textrm{Op}[q]$ with $q\in S^{m_1}(\ZZ^n\times \TT^n)$, and
\begin{equation}\label{E-asym-adj}
q(k,x)\sim \displaystyle\sum_{\alpha} \frac{1}{\alpha !}\Delta_{k}^{\alpha}D_{x}^{(\alpha)}\overline{a(k,x)},
\end{equation}
where $\overline{z}$ denotes the conjugate of $z\in\mathbb{C}$, and the asymptotic relation is interpreted as
\begin{equation*}
\left[q(k,x)-\displaystyle \sum_{|\alpha|<N}\frac{1}{\alpha !}\Delta_{k}^{\alpha}D_{x}^{(\alpha)}\overline{a(k,x)}\right]\in S^{m_1-N}(\ZZ^n\times \TT^n),
\end{equation*}
for all $N\in \NN$.
\end{enumerate}
\end{prop}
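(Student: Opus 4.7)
The plan is to prove (i) and (ii) in parallel by adapting the Kohn--Nirenberg composition/adjoint template to the compact-frequency setting $\ZZ^n\times\TT^n$. For (i), I would first substitute~(\ref{E:op-a}) inside itself, obtaining
\begin{equation*}
(T_aT_bu)(k)=\int_{\TT^n}e^{2\pi ik\cdot y}a(k,y)\widehat{T_bu}(y)\,dy,
\end{equation*}
and then expand $\widehat{T_bu}(y)=\sum_je^{-2\pi ij\cdot y}(T_bu)(j)$ via~(\ref{E:op-a}) once more. Interchanging the $j$-sum with the $x$-integration (justified distributionally, since $\widehat{u}\in C^{\infty}(\TT^n)$) yields an amplitude representation $(T_aT_bu)(k)=\int e^{2\pi ik\cdot x}c(k,x)\widehat{u}(x)\,dx$ with
\begin{equation*}
c(k,x)=\int_{\TT^n}e^{2\pi ik\cdot(y-x)}a(k,y)\Bigl[\sum_{j\in\ZZ^n}e^{2\pi ij\cdot(x-y)}b(j,x)\Bigr]dy,
\end{equation*}
where the inner bracket is understood as a tempered distribution in $y$ at fixed $x$.

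The asymptotic~(\ref{E:ab-exp}) is then extracted by a discrete Newton--Taylor expansion of $b$ in the first slot: writing $j=k+r$, the formula $b(k+r,x)=\sum_{|\alpha|<N}\binom{r}{\alpha}\del_k^{\alpha}b(k,x)+R_N(k,r,x)$ combined with the key algebraic identity $\binom{r}{\alpha}e^{2\pi ir\cdot(x-y)}=\frac{1}{\alpha!}D_x^{(\alpha)}e^{2\pi ir\cdot(x-y)}$ (from the eigenvalue action of $D_x^{(\alpha)}$ on plane waves) lets one convert each Taylor term into a $D_x^{(\alpha)}$-derivative acting against the Dirac comb $\sum_re^{2\pi ir\cdot z}=\delta(z)$ on $\TT^n$. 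The $y$-integral then collapses to $y=x$, producing the $\alpha$-th leading term $\frac{1}{\alpha!}D_x^{(\alpha)}a(k,x)\,\del_k^{\alpha}b(k,x)$; Lemma~\ref{L-1}(iii) places it in $S^{m_1+m_2-|\alpha|}(\ZZ^n\times\TT^n)$, and Lemma~\ref{L-2} then assembles the formal series into a genuine symbol $c\in S^{m_1+m_2}(\ZZ^n\times\TT^n)$. Part~(ii) proceeds analogously from $(T_au,v)=(u,(T_a)^{\dagger}v)$ with $u,v\in\mathcal{S}(\ZZ^n)$: expanding via~(\ref{E:op-a}), Plancherel, and~(\ref{E:inv-ft}), and applying the same Taylor expansion to $\overline{a(j,x)}$ around $j=k$, one recovers~(\ref{E-asym-adj}); the combination $\del_k^{\alpha}D_x^{(\alpha)}\overline{a}$ (rather than a product of two separate symbol factors) reflects that both operations now act on the same conjugated symbol.

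The principal obstacle is controlling the remainder $R_N$ and showing that its contribution to $c$ (respectively to $q$) lies in $S^{m_1+m_2-N}(\ZZ^n\times\TT^n)$ (respectively $S^{m_1-N}(\ZZ^n\times\TT^n)$). Because the frequency space is the compact torus, there is no non-stationary-phase decay to exploit in the $y$-integration; instead, the decay in $|r|$ needed to make the distributional sums absolutely convergent must be recovered by iterating the dual identity relating $D_y^{(\alpha)}$ on $e^{2\pi ir\cdot(x-y)}$ to a falling factorial in $r$, followed by integration by parts in $y$ on $\TT^n$ sufficiently many times. Combined with the symbol estimates from Definition~\ref{D-1} and the bookkeeping that $\del_k^{\alpha}b\in S^{m_2-|\alpha|}(\ZZ^n\times\TT^n)$, this yields the required remainder bounds and completes the proof of both the composition and adjoint rules.
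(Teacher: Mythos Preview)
The paper does not prove Proposition~\ref{L-3}; it merely recalls the statement and cites theorems~3.1 and~3.2 of~\cite{BRK-20} for the proof. So there is no in-paper argument to compare against.

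That said, your sketch is in line with the approach of~\cite{BRK-20} (and its toroidal predecessor~\cite{RT-10}): derive an amplitude representation by iterating~(\ref{E:op-a}), perform a discrete Taylor expansion in the lattice variable, and use the falling-factorial eigenvalue identity $D_x^{(\alpha)}e^{2\pi i r\cdot x}=r^{(\alpha)}e^{2\pi i r\cdot x}$ (equivalently $\binom{r}{\alpha}=\tfrac{1}{\alpha!}r^{(\alpha)}$) to trade Taylor coefficients for $D_x^{(\alpha)}$. The one place where your outline is genuinely thin is the remainder: the expression you write for $c(k,x)$ contains the periodic Dirac comb $\sum_{j}e^{2\pi i j\cdot(x-y)}$ only in a distributional sense, and the discrete Taylor remainder $R_N(k,r,x)$ does not by itself decay in $r$. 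In~\cite{BRK-20} this is handled not by raw integration by parts on $\TT^n$ but by working with the exact (closed) formula for the composed symbol and performing summation by parts in the lattice variable together with the $D_y^{(\beta)}$--$\Delta_k^{\beta}$ duality, which converts $r$-growth into additional difference orders on $b$ and hence into extra $(1+|k|)^{-|\beta|}$ decay via Definition~\ref{D-1}. Your plan to ``iterate the dual identity'' is pointing in the right direction, but as stated it does not yet explain why the resulting bounds are uniform in $k$ and give membership in $S^{m_1+m_2-N}(\ZZ^n\times\TT^n)$ for the full remainder (as opposed to term-by-term). If you want a self-contained proof rather than the citation, that is the step to make precise.
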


For the next proposition, which summarizes the continuity property, see corollary 5.6 in~\cite{BRK-20}.

\begin{prop}\label{L-6} Assume that $a\in S^{m}(\ZZ^n\times \TT^n)$ with $m\in\mathbb{R}$. Let $T_a=\textrm{Op}[a]$ be as in~(\ref{E:op-a}). Then, $T_{a}\colon \mathcal{S}(\ZZ^n)\to \mathcal{S}(\ZZ^n)$ extends to a continuous linear operator $T_{a}\colon H^{s}(\ZZ^n)\to H^{s-m}(\ZZ^n)$ for all $s\in\RR$.
\end{prop}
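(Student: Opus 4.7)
The plan is to reduce the claim, via the composition rule of Proposition~\ref{L-3}, to the $\ell^2$-boundedness of zeroth-order operators. The key observation that makes this clean is that the weight $\Lambda_s(k)=(1+|k|^2)^{s/2}$ depends only on the space variable $k$, so by the inversion formula~(\ref{E:inv-ft}) the operator $T_{\Lambda_s}$ is just multiplication by $\Lambda_s$. In particular $T_{\Lambda_s}T_{\Lambda_{-s}}=T_{\Lambda_{-s}}T_{\Lambda_s}=I$ on $\mathcal{S}(\ZZ^n)$, so no parametrix construction is needed.

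Unraveling the Sobolev norms, the desired inequality on $\mathcal{S}(\ZZ^n)$ becomes $\|T_{\Lambda_{s-m}}T_a u\|\leq C\|T_{\Lambda_s}u\|$. I would insert $I=T_{\Lambda_{-s}}T_{\Lambda_s}$ to write $T_{\Lambda_{s-m}}T_a = (T_{\Lambda_{s-m}}T_a T_{\Lambda_{-s}}) T_{\Lambda_s}$, and then apply Proposition~\ref{L-3}(i) twice to the triple composition, obtaining $T_{\Lambda_{s-m}}T_a T_{\Lambda_{-s}}=T_d$ with
\begin{equation*}
d\in S^{(s-m)+m+(-s)}(\ZZ^n\times\TT^n)=S^0(\ZZ^n\times\TT^n).
\end{equation*}
This yields
\begin{equation*}
\|T_{\Lambda_{s-m}}T_a u\|=\|T_d T_{\Lambda_s}u\|\leq \|T_d\|_{\ell^2\to\ell^2}\,\|T_{\Lambda_s}u\|,
\end{equation*}
so everything hinges on the $\ell^2$-boundedness of $T_d$ for an arbitrary symbol $d\in S^0$.

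To prove this boundedness, I would apply Schur's test to the kernel
\begin{equation*}
K(k,\ell)=\int_{\TT^n}e^{2\pi i(k-\ell)\cdot x}d(k,x)\,dx,\qquad k,\ell\in\ZZ^n.
\end{equation*}
Repeated integration by parts in $x$, using the identity $D_{x_j}e^{2\pi i(k-\ell)\cdot x}=(k_j-\ell_j)e^{2\pi i(k-\ell)\cdot x}$, the absence of boundary terms on $\TT^n$, and the bounds $|D^{(\beta)}_x d(k,x)|\leq C_\beta$ supplied by $d\in S^0$ (uniformly in $k$, and with ordinary $D^{\beta}_x$ recovered from $D^{(\beta)}_x$ via Stirling-number combinatorics), produces
\begin{equation*}
|K(k,\ell)|\leq C_N(1+|k-\ell|)^{-N},\qquad k,\ell\in\ZZ^n,
\end{equation*}
for every $N\in\NN$. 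Taking $N>n$ makes both $\sup_{k}\sum_{\ell}|K(k,\ell)|$ and $\sup_{\ell}\sum_{k}|K(k,\ell)|$ finite, and Schur's lemma yields the $\ell^2$-boundedness of $T_d$.

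Finally, the density of $\mathcal{S}(\ZZ^n)$ in $H^s(\ZZ^n)$ (Lemma~3.16 of~\cite{DK-20}) extends the inequality to the claimed continuous operator $T_a\colon H^s(\ZZ^n)\to H^{s-m}(\ZZ^n)$. The main obstacle in the argument is the Schur-test kernel estimate; the torus setting is actually favorable, because integration by parts in $x\in\TT^n$ produces no boundary contributions and the class $S^0$ controls $x$-derivatives of $d$ uniformly in $k$, avoiding the more delicate Calder\'on--Vaillancourt type arguments required when the frequency variable is continuous. The particular cleanliness of the reduction, in which $T_{\Lambda_s}$ is genuinely invertible rather than merely elliptic, is a pleasant feature of the discrete setting specific to this choice of weight.
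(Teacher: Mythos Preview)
The paper does not give its own proof of this proposition; it simply refers the reader to corollary~5.6 in~\cite{BRK-20}. Your argument is correct and provides a self-contained proof: the reduction $T_{\Lambda_{s-m}}T_aT_{\Lambda_{-s}}=T_d\in\textrm{Op}[S^0]$ via Proposition~\ref{L-3} is valid (and, as you note, especially clean because $T_{\Lambda_s}$ is exact multiplication by $\Lambda_s$), the kernel estimate $|K(k,\ell)|\le C_N(1+|k-\ell|)^{-N}$ follows by the integration-by-parts argument you describe (the passage from $D_x^{(\beta)}$-bounds to ordinary $D_x^{\beta}$-bounds is a triangular change of basis), and Schur's test then yields $\ell^2$-boundedness. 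This reduction-to-order-zero followed by a kernel estimate is the standard route, and is in all likelihood what~\cite{BRK-20} does as well.
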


Before going further we define
\begin{equation*}
S^{-\infty}(\ZZ^n\times \TT^n):=\displaystyle\cap_{t\in\RR}S^{t}(\ZZ^n\times \TT^n).
\end{equation*}

The statement of the next proposition, which describes the parametrix property of an elliptic operator (that is, an operator whose symbol belongs to $ES^{m}(\ZZ^n\times \TT^n)$), is contained in theorem 3.6 in~\cite{BRK-20}.
\begin{prop}\label{L-5} Assume that $p\in ES^{m}(\ZZ^n\times \TT^n)$ with $m\in\mathbb{R}$. Let $T_p=\textrm{Op}[p]$  be as in~(\ref{E:op-a}). Then, there exists an operator $Q=\textrm{Op}[q]$ with $q\in S^{-m}(\ZZ^n\times \TT^n)$, such that
\begin{equation*}
  QP=I+V_1\,\quad PQ=I+V_2,
\end{equation*}
where $I$ is the identity operator and $V_j=\textrm{Op}[v_j]$ with $v_j\in S^{-\infty}(\ZZ^n\times \TT^n)$, $j=1,2$.
\end{prop}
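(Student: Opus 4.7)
The plan is to construct $Q$ as a left parametrix via the standard iterative Neumann-series procedure, and then deduce the right parametrix identity from the left one. First, I pick a cutoff $\chi=\chi(k)$ depending only on $k$ with $\chi(k)=0$ for $|k|\leq R$ and $\chi(k)=1$ for $|k|>R$; since $\chi$ does not depend on $x$, all its $D^{(\beta)}_{x}$-derivatives vanish, and ellipticity of $p$ (Definition~\ref{D-2}) together with a direct estimate against Definition~\ref{D-1} shows that $q_0(k,x):=\chi(k)/p(k,x)$ lies in $S^{-m}(\ZZ^n\times\TT^n)$. Applying the composition rule of Proposition~\ref{L-3}(i) to $T_{q_0}T_p$, the $\alpha=0$ term of the asymptotic expansion is $q_0 p=\chi=1-(1-\chi)$ with $1-\chi$ of compact $k$-support and therefore in $S^{-\infty}(\ZZ^n\times\TT^n)$; by Lemma~\ref{L-1}(iii) and~(ii), for every $\alpha$ with $|\alpha|\geq 1$ the summand $\frac{1}{\alpha!}D^{(\alpha)}_{x}q_0\cdot\Delta^{\alpha}_{k}p$ lies in $S^{-|\alpha|}(\ZZ^n\times\TT^n)$. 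Hence
\begin{equation*}
T_{q_0}T_p=I+\textrm{Op}[r_0],\qquad r_0\in S^{-1}(\ZZ^n\times\TT^n).
\end{equation*}

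Next, I iterate. Assuming inductively that symbols $q_0,\dots,q_{N-1}$ with $q_j\in S^{-m-j}(\ZZ^n\times\TT^n)$ have been chosen so that $\textrm{Op}\!\left[\sum_{j<N}q_j\right]T_p=I+\textrm{Op}[r_{N-1}]$ with $r_{N-1}\in S^{-N}(\ZZ^n\times\TT^n)$, I set $q_N:=-r_{N-1}\,q_0\in S^{-m-N}(\ZZ^n\times\TT^n)$ (Lemma~\ref{L-1}(ii)). The $\alpha=0$ contribution to the composition expansion of $\textrm{Op}[q_N]T_p$ is $q_N p=-r_{N-1}\chi\equiv -r_{N-1}\pmod{S^{-\infty}}$, which cancels the previous error, while each $|\alpha|\geq 1$ summand belongs to $S^{-N-1}(\ZZ^n\times\TT^n)$ by the same Lemma~\ref{L-1} argument as above. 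Consequently $\textrm{Op}\!\left[\sum_{j\leq N}q_j\right]T_p=I+\textrm{Op}[r_N]$ with $r_N\in S^{-N-1}(\ZZ^n\times\TT^n)$. Applying the asymptotic summation Lemma~\ref{L-2} to the sequence $(q_j)_{j\geq 0}$, I pick $q\in S^{-m}(\ZZ^n\times\TT^n)$ with $q\sim\sum_j q_j$ and set $Q:=T_q$. Then $QT_p-I\in\textrm{Op}(S^{-N})$ for every $N\in\NN$, so $QT_p=I+V_1$ with $V_1=\textrm{Op}[v_1]$ and $v_1\in S^{-\infty}(\ZZ^n\times\TT^n)$.

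Finally, an entirely symmetric construction starting from $T_pT_{q_0}=I+\textrm{Op}[\tilde r_0]$ (the $\alpha=0$ term is again $\chi\equiv 1$ modulo $S^{-\infty}$) produces $\tilde q\in S^{-m}(\ZZ^n\times\TT^n)$ with $T_pT_{\tilde q}=I+\widetilde V$, where $\widetilde V$ has symbol in $S^{-\infty}(\ZZ^n\times\TT^n)$. Computing $T_qT_pT_{\tilde q}$ in two different ways,
\begin{equation*}
T_{\tilde q}+V_1T_{\tilde q}=(I+V_1)T_{\tilde q}=T_qT_pT_{\tilde q}=T_q(I+\widetilde V)=T_q+T_q\widetilde V,
\end{equation*}
so $T_q-T_{\tilde q}=V_1T_{\tilde q}-T_q\widetilde V$, which has symbol in $S^{-\infty}(\ZZ^n\times\TT^n)$ by Proposition~\ref{L-3}(i). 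Therefore $T_pT_q=T_pT_{\tilde q}+T_p(T_q-T_{\tilde q})=I+V_2$ with $V_2=\textrm{Op}[v_2]$ and $v_2\in S^{-\infty}(\ZZ^n\times\TT^n)$, completing the two parametrix identities. The main obstacle is the careful bookkeeping at each iteration step — one must check that the subleading terms in the composition expansion always drop by at least one order below the leading term, which is precisely where the asymmetric feature of the class (that $\Delta_k$ lowers the order while $D^{(\beta)}_{x}$ does not) is used in an essential way to make the parametrix scheme converge asymptotically.
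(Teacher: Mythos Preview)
Your construction is correct: the cutoff $q_0=\chi/p$, the iterative improvement $q_N=-r_{N-1}q_0$, the asymptotic summation via Lemma~\ref{L-2}, and the left/right uniqueness argument are exactly the standard parametrix scheme, and you have been careful about the one asymmetric feature of this calculus (only $\Delta_k$ lowers the order, not $D_x^{(\beta)}$), which is what makes each iteration gain one full order. The only step you pass over quickly is the verification that $\chi/p\in S^{-m}(\ZZ^n\times\TT^n)$; this does require an induction on $|\alpha|+|\beta|$ using the discrete Leibniz rule for $\Delta_k$ and the lower bound $|p(k,x)|\geq C(1+|k|)^m$, but that is routine.

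By way of comparison: the paper itself does not supply a proof of this proposition at all --- it simply invokes theorem~3.6 of~\cite{BRK-20}. Your argument is precisely the classical construction that underlies that cited result, so in substance you have reproduced what the external reference contains rather than offered an alternative route.
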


As a consequence of propositions~\ref{L-6} and~\ref{L-5}, one easily gets the following elliptic regularity property:

\begin{prop}\label{L-7} Assume that $a\in ES^{m}(\ZZ^n\times \TT^n)$ with  $m\in\mathbb{R}$.  Let $T_{a}$ be as in~(\ref{E:op-a}). Additionally, assume that $u\in H^{-\infty}(\ZZ^n)$ and $T_{a}u\in \sob^{s}(\ZZ^n)$, for some $s\in \RR$. Then, $u\in \sob^{s+m}(\ZZ^n)$.
\end{prop}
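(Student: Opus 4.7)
The strategy is to apply the parametrix from Proposition~\ref{L-5} and then use the Sobolev continuity statement of Proposition~\ref{L-6} to read off the regularity of $u$ from the regularity of $T_a u$.

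First, I would invoke Proposition~\ref{L-5}: since $a\in ES^{m}(\ZZ^n\times \TT^n)$, there exists an operator $Q=\textrm{Op}[q]$ with $q\in S^{-m}(\ZZ^n\times \TT^n)$ together with $V_1=\textrm{Op}[v_1]$, $v_1\in S^{-\infty}(\ZZ^n\times \TT^n)$, such that
\begin{equation*}
QT_{a}=I+V_1
\end{equation*}
as operators on $\mathcal{S}(\ZZ^n)$. By Proposition~\ref{L-6}, each of $Q$, $T_a$, and $V_1$ extends to continuous linear maps between discrete Sobolev spaces of appropriate orders, so in particular these extensions agree with the corresponding actions on $H^{-\infty}(\ZZ^n)=\cup_{t}H^{t}(\ZZ^n)$, and the parametrix identity continues to hold on $H^{-\infty}(\ZZ^n)$. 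Thus for the given $u$ we may write
\begin{equation*}
u=QT_{a}u-V_{1}u.
\end{equation*}

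Next, I would analyze each term on the right. Since by hypothesis $T_{a}u\in H^{s}(\ZZ^n)$, and since $q\in S^{-m}(\ZZ^n\times \TT^n)$, Proposition~\ref{L-6} gives $Q\colon H^{s}(\ZZ^n)\to H^{s+m}(\ZZ^n)$ continuously; hence $QT_{a}u\in H^{s+m}(\ZZ^n)$. For the remainder term, the hypothesis $u\in H^{-\infty}(\ZZ^n)$ means $u\in H^{t}(\ZZ^n)$ for some $t\in\RR$. Because $v_1\in S^{-\infty}(\ZZ^n\times \TT^n)\subseteq S^{r}(\ZZ^n\times \TT^n)$ for every $r\in\RR$, another application of Proposition~\ref{L-6} (with $r$ chosen so that $t-r\geq s+m$, e.g.\ $r=t-s-m$) yields $V_{1}u\in H^{s+m}(\ZZ^n)$. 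Combining the two terms gives $u\in H^{s+m}(\ZZ^n)$, which is the desired conclusion.

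The only step that requires care, and which I would view as the sole technical obstacle, is justifying that the parametrix identity $QT_{a}=I+V_1$, originally stated on $\mathcal{S}(\ZZ^n)$, may legitimately be applied to an element of $H^{-\infty}(\ZZ^n)$. This follows from the density of $\mathcal{S}(\ZZ^n)$ in each $H^{t}(\ZZ^n)$ (established in~\cite{DK-20}) together with the Sobolev continuity of $Q$, $T_a$, and $V_1$ guaranteed by Proposition~\ref{L-6}: both sides of the identity are continuous between suitable Sobolev spaces and coincide on the dense subspace $\mathcal{S}(\ZZ^n)$, so they agree on any $H^{t}(\ZZ^n)$ containing $u$. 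Everything else is a direct bookkeeping of orders.
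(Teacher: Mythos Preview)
Your proof is correct and is precisely the argument the paper has in mind: the paper states that Proposition~\ref{L-7} follows ``easily'' from Propositions~\ref{L-5} and~\ref{L-6}, and your parametrix-plus-continuity bookkeeping is exactly that deduction. The extra care you take in justifying the extension of the parametrix identity to $H^{-\infty}(\ZZ^n)$ via density and Sobolev continuity is appropriate and fills in the only nontrivial detail.
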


\subsection{Further properties of the class $ES^{m}(\ZZ^n\times\TT^n)$.}

In this section we describe two additional features of the class $ES^{m}(\ZZ^n\times\TT^n)$.

\begin{lemma}\label{L-1-hyp} Assume that $a \in ES^{m_1}(\ZZ^n\times\TT^n)$, $b\in ES^{m_2}(\ZZ^n\times\TT^n)$, and $c\in S^{m_3}(\ZZ^n\times\TT^n)$, with $m_j\in\RR$, $j=1,2,3$.
\begin{enumerate}
  \item [(i)]  Then  $ab\in ES^{m_1+m_2}(\ZZ^n\times\TT^n)$.
  \item [(ii)] If $m_3<m_1$, then $(a+c)\in ES^{m_1}(\ZZ^n\times\TT^n)$.
\end{enumerate}
\end{lemma}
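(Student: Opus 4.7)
The plan is straightforward: both parts follow from the definition of the elliptic symbol class and basic properties already recorded in Lemma \ref{L-1}, so the main work is verifying the lower bound in Definition \ref{D-2} while using Lemma \ref{L-1}(i)--(ii) to obtain the underlying symbol class membership for free.

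For part (i), Lemma \ref{L-1}(ii) already gives $ab \in S^{m_1+m_2}(\ZZ^n\times\TT^n)$, so I only need to verify the pointwise lower bound. By the ellipticity of $a$ and $b$, there exist constants $C_j > 0$ and $R_j > 0$ such that $|a(k,x)| \geq C_1(1+|k|)^{m_1}$ for $|k| > R_1$ and $|b(k,x)| \geq C_2(1+|k|)^{m_2}$ for $|k| > R_2$. Setting $R = \max(R_1,R_2)$ and $C = C_1 C_2$, multiplying these two inequalities gives
\[
|a(k,x)b(k,x)| \geq C(1+|k|)^{m_1+m_2}, \qquad |k| > R,
\]
which is exactly the ellipticity condition for $ab$.

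For part (ii), since $m_3 < m_1$, Lemma \ref{L-1}(i) yields $c \in S^{m_1}(\ZZ^n\times\TT^n)$, and hence $a + c \in S^{m_1}(\ZZ^n\times\TT^n)$ directly from Definition \ref{D-1}. The remaining task is the lower bound. Using the ellipticity of $a$, pick $C > 0$ and $R > 0$ with $|a(k,x)| \geq C(1+|k|)^{m_1}$ for $|k| > R$. From $c \in S^{m_3}(\ZZ^n\times\TT^n)$ with the multiindex choice $\alpha = \beta = 0$ we obtain a constant $C' > 0$ with $|c(k,x)| \leq C'(1+|k|)^{m_3}$ for all $(k,x)$. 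The reverse triangle inequality then gives, for $|k| > R$,
\[
|a(k,x) + c(k,x)| \geq (1+|k|)^{m_1}\bigl(C - C'(1+|k|)^{m_3 - m_1}\bigr).
\]
Since $m_3 - m_1 < 0$, the factor $(1+|k|)^{m_3-m_1}$ tends to $0$ as $|k| \to \infty$, so we can choose $R_0 > 0$ so large that $C'(1+|k|)^{m_3-m_1} \leq C/2$ for $|k| > R_0$. Setting $R^{\ast} = \max(R, R_0)$ delivers $|a(k,x) + c(k,x)| \geq (C/2)(1+|k|)^{m_1}$ for $|k| > R^{\ast}$, which is the required ellipticity estimate.

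Neither part presents a real obstacle: (i) is immediate from the multiplicativity of the lower bound, and the only subtlety in (ii) is making sure the subdominant term $c$ can be absorbed into a lower bound of the same order as $a$, which is handled by the trivial growth estimate $|c(k,x)| \leq C'(1+|k|)^{m_3}$ coming from the $\alpha = \beta = 0$ case of Definition \ref{D-1}. If anything is worth double-checking, it is just that the constants $R$ and $R_0$ are chosen uniformly in $x \in \TT^n$, which they are, since all the estimates above hold pointwise in $(k,x)$.
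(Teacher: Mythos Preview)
Your proof is correct and follows essentially the same approach as the paper's: both parts verify the ellipticity lower bound directly, using $|ab|=|a||b|$ for (i) and the reverse triangle inequality together with the upper bound $|c(k,x)|\leq C'(1+|k|)^{m_3}$ for (ii). Your version is simply more explicit about the choice of constants and about invoking Lemma~\ref{L-1} for the underlying symbol-class membership.
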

\begin{proof} Property (i) is easily verified by using the equality $|ab|=|a||b|$ and definition~\ref{D-2}. To verify the property (ii), we use the hypothesis $c\in S^{m_3}(\ZZ^n\times\TT^n)$, which tells us
\begin{equation*}
|c(k,x)|\leq C_1 (1+|k|)^{m_3},
\end{equation*}
where $C_1$ is a constant independent of $x$. We also use the absolute value inequality
\begin{eqnarray*}
|a(k,x)+c(k,x)|\geq |a(k,x)|-|c(k,x)|
\end{eqnarray*}
and definition~\ref{D-2}, which tells us
\begin{eqnarray*}
|a(k,x)|\geq C_2(1+|k|)^{m_1},\qquad |k|> R,
\end{eqnarray*}
where $C_2$ and $R$ are constants. Combining the mentioned three inequalities together with the assumption $m_3<m_1$, we obtain $(a+c)\in ES^{m_1}(\ZZ^n\times\TT^n)$.
\end{proof}

We now record two properties of the operators whose symbols belong to the class $ES^{m}(\ZZ^n\times\TT^n)$.

\begin{lemma}\label{L-4-hyp} Assume that $T_{a}=\textrm{Op}[a]$ and $T_{b}=\textrm{Op}[b]$ with $a \in ES^{m_1}(\ZZ^n\times\TT^n)$, $b\in ES^{m_2}(\ZZ^n\times\TT^n)$, with $m_j\in\RR$, $j=1,2$. Then
\begin{enumerate}
  \item [(i)]  $T_aT_b=\textrm{Op}[c]$ with $c\in ES^{m_1+m_2}(\ZZ^n\times\TT^n)$;
  \item [(ii)] $(T_a)^{\dagger}=\textrm{Op}[c]$ with $c\in ES^{m_1}(\ZZ^n\times\TT^n)$.
\end{enumerate}
\end{lemma}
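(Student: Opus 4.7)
The plan is to extract the leading (principal) term from each asymptotic expansion supplied by Proposition~\ref{L-3}, verify that this leading term is elliptic, and then absorb the lower-order remainder by means of Lemma~\ref{L-1-hyp}(ii).

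For part (i), Proposition~\ref{L-3}(i) gives $T_aT_b=\operatorname{Op}[c]$ with $c\in S^{m_1+m_2}(\ZZ^n\times\TT^n)$ and the asymptotic expansion~(\ref{E:ab-exp}). Isolating the term $\alpha=0$, the $N=1$ case of the expansion yields
\begin{equation*}
c(k,x)-a(k,x)b(k,x)\in S^{m_1+m_2-1}(\ZZ^n\times\TT^n).
\end{equation*}
By Lemma~\ref{L-1-hyp}(i) the product $ab$ lies in $ES^{m_1+m_2}(\ZZ^n\times\TT^n)$, and since $m_1+m_2-1<m_1+m_2$, applying Lemma~\ref{L-1-hyp}(ii) with the choices $ab$ and $c-ab$ yields $c=ab+(c-ab)\in ES^{m_1+m_2}(\ZZ^n\times\TT^n)$, as required.

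For part (ii), Proposition~\ref{L-3}(ii) gives $(T_a)^\dagger=\operatorname{Op}[q]$ with $q\in S^{m_1}(\ZZ^n\times\TT^n)$, and taking $N=1$ in the associated asymptotic expansion~(\ref{E-asym-adj}) yields
\begin{equation*}
q(k,x)-\overline{a(k,x)}\in S^{m_1-1}(\ZZ^n\times\TT^n).
\end{equation*}
A short verification shows $\overline{a}\in ES^{m_1}(\ZZ^n\times\TT^n)$: the membership in $S^{m_1}$ is immediate from $|D_x^{(\beta)}\Delta_k^\alpha \overline{a}|=|D_x^{(\beta)}\Delta_k^\alpha a|$, and the ellipticity lower bound from definition~\ref{D-2} is preserved under complex conjugation because $|\overline{a(k,x)}|=|a(k,x)|$. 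Applying Lemma~\ref{L-1-hyp}(ii) to $\overline{a}\in ES^{m_1}$ and $q-\overline{a}\in S^{m_1-1}$ gives $q\in ES^{m_1}(\ZZ^n\times\TT^n)$.

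There is no real obstacle in this proof; it is entirely a matter of pairing the composition/adjoint asymptotic formulas of~\cite{BRK-20} with the elementary closure properties of $ES^m$ already established in Lemma~\ref{L-1-hyp}. The only point that requires a moment's care is noting that ellipticity, which is a condition only on $|a|$, passes through complex conjugation — but this is immediate.
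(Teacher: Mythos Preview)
Your argument is correct and is essentially the paper's own proof: extract the principal term from the asymptotic expansion in Proposition~\ref{L-3}, check its ellipticity (via Lemma~\ref{L-1-hyp}(i) for $ab$, by inspection for $\overline{a}$), and absorb the order-$(-1)$ remainder using Lemma~\ref{L-1-hyp}(ii); your use of the $N=1$ case of the expansion is in fact slightly cleaner than the paper's detour through Lemma~\ref{L-2}. One small correction: the displayed equality $|D_x^{(\beta)}\Delta_k^\alpha\overline{a}|=|D_x^{(\beta)}\Delta_k^\alpha a|$ is not literally true, since $D_x^{(\beta)}$ mixes different powers of $D_x$ and conjugation flips the sign of odd-order terms (e.g.\ $D_x^{(2)}\overline a=\overline{D_x(D_x+1)a}\neq \overline{D_x^{(2)}a}$); the conclusion $\overline a\in S^{m_1}$ is of course correct, but the clean justification is via the equivalent seminorms built from $\partial_x^\beta$, for which $\partial_x^\beta\Delta_k^\alpha\overline a=\overline{\partial_x^\beta\Delta_k^\alpha a}$ genuinely holds.
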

\begin{proof} Let us verify the property (i). From~(\ref{E:ab-exp}) we get
\begin{equation}\label{E:asympt-bracket}
c\sim ab + \sum_{|\alpha|\geq 1} \frac{1}{\alpha !}D_{x}^{(\alpha)}a(k,x)\Delta_{k}^{\alpha}b(k,x).
\end{equation}
By lemma~\ref{L-1-hyp}(i) we have $ab\in ES^{m_1+m_2}(\ZZ^n\times\TT^n)$. Turning to the expression after the summation symbol, using the parts (iii) and (ii) of lemma~\ref{L-1},  we get
$[D_{x}^{(\alpha)}a\Delta_{k}^{\alpha}b]\in S^{m_1+m_2-|\alpha|}(\ZZ^n\times\TT^n)$. Therefore, referring to lemma~\ref{L-2}, we see that the term after $+$ on the right hand side of~(\ref{E:asympt-bracket}) leads to a symbol belonging to  $S^{m_1+m_2-1}(\ZZ^n\times\TT^n)$. Since $ab\in ES^{m_1+m_2}(\ZZ^n\times\TT^n)$ we can use part (ii) of lemma~\ref{L-1-hyp} to infer $c\in ES^{m_1+m_2}(\ZZ^n\times\TT^n)$. The property (ii) can be proved in the same way after starting from the expansion~(\ref{E-asym-adj}).
\end{proof}

\subsection{Two Abstract Results}\label{S-5-1}
We recall that if $\mathscr{H}_1$ and $\mathscr{H}_2$ are Hilbert spaces with inner products $(\cdot,\cdot)_{\mathscr{H}_j}$, $j=1,2$, then $\mathscr{H}_1\oplus\mathscr{H}_2$ is a Hilbert space whose inner product is defined as follows: For
\[
u=\left(
  \begin{array}{c}
    u_1 \\
    u_2 \\
  \end{array}
\right)\in \mathscr{H}_1\oplus\mathscr{H}_2,\quad
v=\left(
  \begin{array}{c}
    v_1 \\
    v_2 \\
  \end{array}
\right)\in \mathscr{H}_1\oplus\mathscr{H}_2
\]
we have
\begin{equation}\label{E-h-sum}
(u,v)_{\mathscr{H}_1\oplus\mathscr{H}_2}=(u_1,v_1)_{\mathscr{H}_1}+(u_2,v_2){\mathscr{H}_2}.
\end{equation}
Denote by $E_{j}\colon (\overline{\mathscr{H}_j})^{'}\to \mathscr{H}_j$, $j=1,2$, the isometric isomorphisms identifying anti-linear functionals with vectors via Riesz theorem:
\begin{equation}\label{Riesz}
\langle f,\cdot\rangle_{d_j}=(E_{j}f, \cdot)_{\mathscr{H}_j},
\end{equation}
where $\langle f,u\rangle_{d_j}$ indicates the action of the functional $f\in (\overline{\mathscr{H}_j})^{'}$ on the vector $u\in\mathscr{H}_{j}$.

For an operator $G\colon \mathscr{H}_1\to \mathscr{H}_2$, we define  $G^*\colon \mathscr{H}_2\to \mathscr{H}_1$ as

\begin{equation}\label{E:def-g-star}
G^*:=E_{1}G'E_{2}^{-1},
\end{equation}
where $G'\colon (\overline{\mathscr{H}_2})^{'}\to (\overline{\mathscr{H}_1})^{'}$ is the true adjoint of $G$ introduced in~(\ref{E:def-true-adj}).

For the following abstract result, we refer the reader to exercise IV.26.2 (and the outline of its proof) in the book~\cite{Sh-pdo-book}:

\begin{lemma}\label{L:abs-pdo-book} Let $\mathscr{H}_1$ and $\mathscr{H}_2$ be Hilbert spaces with inner products $(\cdot,\cdot)_{\mathscr{H}_j}$, $j=1,2$. Let $P\colon \mathscr{H}_1\to \mathscr{H}_2$ and $Q\colon \mathscr{H}_2\to \mathscr{H}_1$ be densely defined operators such that
\begin{equation}\label{E:sym-hyp}
  (Pu,v)_{\mathscr{H}_2}=(u, Qv)_{\mathscr{H}_1},
\end{equation}
for all $u\in\dom(P)\subseteq \mathscr{H}_1$ and $v\in \dom(Q)\subseteq \mathscr{H}_2$. Let $\mathcal{T}\colon \mathscr{H}_1\oplus\mathscr{H}_2\to \mathscr{H}_1\oplus\mathscr{H}_2$ be an operator given by
\begin{equation}\label{E:matrix-op}
\mathcal{T}=\left(
              \begin{array}{cc}
                0 & Q \\
                P & 0 \\
              \end{array}
            \right), \qquad \mathcal{T}\left(\begin{array}{c}
                u_1  \\
                u_2 \\
              \end{array}\right)=\left(\begin{array}{c}
                Qu_2  \\
                Pu_1 \\
              \end{array}\right),
\end{equation}
with the domain $\dom \mathcal{T}=\dom (P)\oplus \dom (Q)$. (Here, $\mathscr{H}_1\oplus\mathscr{H}_2$ is a Hilbert space with the inner product~(\ref{E-h-sum}).) Then the following are equivalent:
\begin{enumerate}
  \item [(i)] The operators $P$ and $Q$ satisfy $\widetilde{P}=Q^*$ and $\widetilde{Q}=P^*$. (Here, $\widetilde{S}$ refers to the closure of an operator $S$ and $S^*$ is as explained above the statement of this lemma.)
  \item [(ii)] The operator $\mathcal{T}$ is essentially self-adjoint on $\dom (P)\oplus \dom (Q)$.
\end{enumerate}
\end{lemma}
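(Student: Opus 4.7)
The strategy is to compute both the closure $\widetilde{\mathcal{T}}$ and the Hilbert-space adjoint $\mathcal{T}^{*}$ explicitly using the off-diagonal block structure of $\mathcal{T}$, and then to read off the equivalence by comparing components. To begin, I would verify that the formal adjointness hypothesis~(\ref{E:sym-hyp}) makes $\mathcal{T}$ symmetric: for $(u_1,u_2),(v_1,v_2)\in\dom(P)\oplus\dom(Q)$, expanding the inner product via~(\ref{E-h-sum}) and applying~(\ref{E:sym-hyp}) to each summand yields $(\mathcal{T}(u_1,u_2),(v_1,v_2))_{\mathscr{H}_1\oplus\mathscr{H}_2}=((u_1,u_2),\mathcal{T}(v_1,v_2))_{\mathscr{H}_1\oplus\mathscr{H}_2}$; since $\dom(P)\oplus\dom(Q)$ is dense in $\mathscr{H}_1\oplus\mathscr{H}_2$, essential self-adjointness of $\mathcal{T}$ is a meaningful question.

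Next, I would describe $\mathcal{T}^{*}$ componentwise. A pair $(f_1,f_2)$ lies in $\dom(\mathcal{T}^{*})$ iff the functional $(u_1,u_2)\mapsto (Pu_1,f_2)_{\mathscr{H}_2}+(Qu_2,f_1)_{\mathscr{H}_1}$ is continuous on $\dom(P)\oplus\dom(Q)$; testing with $u_2=0$ and then $u_1=0$ (both permissible since $0$ belongs to each factor) decouples this into the two independent conditions $f_2\in\dom(P^{*})$ and $f_1\in\dom(Q^{*})$, and unraveling the defining relations yields $\dom(\mathcal{T}^{*})=\dom(Q^{*})\oplus\dom(P^{*})$ with $\mathcal{T}^{*}(f_1,f_2)=(P^{*}f_2,Q^{*}f_1)$. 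A parallel argument identifies the closure: if $(u_1^{(n)},u_2^{(n)})\in\dom(P)\oplus\dom(Q)$ satisfies $(u_1^{(n)},u_2^{(n)})\to (u_1,u_2)$ together with $(Qu_2^{(n)},Pu_1^{(n)})\to (g_1,g_2)$, then componentwise convergence in $\mathscr{H}_1\oplus\mathscr{H}_2$ forces $u_1\in\dom(\widetilde{P})$, $u_2\in\dom(\widetilde{Q})$, $g_2=\widetilde{P}u_1$, and $g_1=\widetilde{Q}u_2$, so $\dom(\widetilde{\mathcal{T}})=\dom(\widetilde{P})\oplus\dom(\widetilde{Q})$ and $\widetilde{\mathcal{T}}(u_1,u_2)=(\widetilde{Q}u_2,\widetilde{P}u_1)$.

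With these two explicit formulas, the equivalence of (i) and (ii) follows by inspection. Essential self-adjointness of the symmetric operator $\mathcal{T}$ amounts to $\widetilde{\mathcal{T}}=\mathcal{T}^{*}$, which by comparison of components is equivalent to the simultaneous equalities $\widetilde{P}=Q^{*}$ and $\widetilde{Q}=P^{*}$, precisely condition~(i). The main, and really only, obstacle is articulating the componentwise decomposition of $\dom(\mathcal{T}^{*})$ and $\dom(\widetilde{\mathcal{T}})$ with care. Once one notes that the two coordinates can be varied independently (because $0$ belongs to both $\dom(P)$ and $\dom(Q)$, and because convergence in $\mathscr{H}_1\oplus\mathscr{H}_2$ is componentwise), the two continuity and convergence requirements decouple cleanly, and no additional analytic input beyond~(\ref{E:sym-hyp}) is required.
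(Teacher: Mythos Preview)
Your argument is correct. The paper does not give its own proof of this lemma; it simply cites exercise~IV.26.2 in Shubin's book~\cite{Sh-pdo-book} and the outline provided there. Your approach---computing $\mathcal{T}^{*}$ and $\widetilde{\mathcal{T}}$ blockwise by exploiting the ability to vary the two components independently, then reading off the equivalence---is exactly the standard route and is almost certainly what Shubin's outline indicates. One small point you leave implicit but which is worth stating: the hypothesis~(\ref{E:sym-hyp}) gives $P\subseteq Q^{*}$ and $Q\subseteq P^{*}$, so both $P$ and $Q$ are closable (being contained in closed operators), which is what makes $\widetilde{P}$ and $\widetilde{Q}$ well defined in condition~(i).
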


 Before stating the next lemma, which will help us apply the previous result to our problem, we remind the reader that we use the terms ``formally adjoint pair" and ``essentially adjoint pair" in the same sense as in definitions~\ref{D-f-5} and ~\ref{D-5} above.

\begin{lemma}\label{L:abs-pdo-book-1} Assume that  $Y\colon \mathscr{H}_1\to \mathscr{H}_2$ and $Z\colon (\overline{\mathscr{H}_2})^{'}\to (\overline{\mathscr{H}_1})^{'}$ constitute a formally adjoint pair. Let $\mathcal{T}$ be as in~(\ref{E:matrix-op}) with $P=Y$ and $Q=E_{1}ZE_{2}^{-1}$, with the isometric isomorphisms $E_{j}\colon (\overline{\mathscr{H}_j})^{'}\to \mathscr{H}_j$ as in Riesz theorem.
Then the following are equivalent:
\begin{enumerate}
  \item [(i)] the operators $Y$ and $Z$ form an essentially adjoint pair;
  \item [(ii)] the operator $\mathcal{T}$ is essentially self-adjoint on $\dom (Y)\oplus E_2\dom (Z)$.
\end{enumerate}
\end{lemma}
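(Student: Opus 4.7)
The natural strategy is to invoke Lemma~\ref{L:abs-pdo-book} with the specific choices $P := Y$ and $Q := E_{1}ZE_{2}^{-1}$, and then to reconcile its conclusion with the definition of essentially adjoint pair given in Definition~\ref{D-5}. My first task would be to verify the formal-symmetry hypothesis~(\ref{E:sym-hyp}) for this choice. Given $u\in \dom(Y)$ and $v\in \dom(Q)=E_{2}\dom(Z)$, write $v=E_{2}f$ with $f\in \dom(Z)$. I expect a short chain of identifications — applying the Riesz identity~(\ref{Riesz}) on each side and inserting the formal-adjoint-pair relation $\langle f, Yu\rangle_{d_2}=\langle Zf,u\rangle_{d_1}$ in the middle — to yield
\begin{equation*}
(Pu,v)_{\mathscr{H}_2} = \overline{\langle f,Yu\rangle_{d_2}} = \overline{\langle Zf,u\rangle_{d_1}} = (u,E_{1}Zf)_{\mathscr{H}_1} = (u,Qv)_{\mathscr{H}_1},
\end{equation*}
so that~(\ref{E:sym-hyp}) holds and Lemma~\ref{L:abs-pdo-book} becomes applicable.

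Having applied the abstract device, essential self-adjointness of $\mathcal{T}$ is equivalent to the pair of relations $\widetilde{P}=Q^{*}$ and $\widetilde{Q}=P^{*}$, and it remains to match these with $\widetilde{Y}=Z^{'}$ and $\widetilde{Z}=Y^{'}$. The identity $\widetilde{Q}=P^{*}$ is the easy half: since $E_{1}$ and $E_{2}^{-1}$ are everywhere-defined isometric isomorphisms, closure commutes with them, giving $\widetilde{Q}=E_{1}\widetilde{Z}E_{2}^{-1}$; and the formula $G^{*}=E_{1}G^{'}E_{2}^{-1}$ recalled just above Lemma~\ref{L:abs-pdo-book} specializes to $P^{*}=E_{1}Y^{'}E_{2}^{-1}$. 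Applying $E_{1}^{-1}$ on the left and $E_{2}$ on the right then makes $\widetilde{Q}=P^{*}$ equivalent to $\widetilde{Z}=Y^{'}$.

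The remaining identity $\widetilde{P}=Q^{*}$, which ought to translate to $\widetilde{Y}=Z^{'}$, is the delicate part. Here one must unwind the canonical Riesz identifications of $(\overline{(\overline{\mathscr{H}_j})^{'}})^{'}$ with $\mathscr{H}_j$ that are implicit in writing the equality $\widetilde{Y}=Z^{'}$ in Definition~\ref{D-5}. The plan is to apply the identity $G^{*}=E_{1}G^{'}E_{2}^{-1}$ once more, but this time to $Z$ regarded as an operator between the Hilbert spaces $(\overline{\mathscr{H}_j})^{'}$ (equipped with the inner products transported from $\mathscr{H}_j$ by the Riesz isomorphisms $E_j$), and then to compose with $E_{1}$ on the left and $E_{2}^{-1}$ on the right to rewrite $Q^{*}$. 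Tracking the resulting string of Riesz maps should reveal that $Q^{*}$ agrees with $Z^{'}$ once the latter is viewed as an operator $\mathscr{H}_{1}\to \mathscr{H}_{2}$ via the double anti-dual identification; granting this, $\widetilde{P}=Q^{*}$ becomes $\widetilde{Y}=Z^{'}$.

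The main obstacle is precisely this last bookkeeping: the verification that $Q^{*}$ coincides with $Z^{'}$ under the canonical iso $(\overline{(\overline{\mathscr{H}_j})^{'}})^{'}\cong \mathscr{H}_j$. The algebra is routine, but the number of nested anti-duals invites sign and direction errors, and one must be careful that the Riesz maps at each layer compose in the right order. Once this identification is established, the two equivalences combine to give the desired statement: $\mathcal{T}$ is essentially self-adjoint on $\dom(Y)\oplus E_{2}\dom(Z)$ if and only if $Y$ and $Z$ form an essentially adjoint pair in the sense of Definition~\ref{D-5}.
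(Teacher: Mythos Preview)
Your proposal is correct and follows the same route as the paper: verify the symmetry relation~(\ref{E:sym-hyp}) for $P=Y$ and $Q=E_1ZE_2^{-1}$ via the Riesz identity and the formal-adjoint-pair hypothesis, then invoke Lemma~\ref{L:abs-pdo-book} and translate the conditions $\widetilde{P}=Q^*$, $\widetilde{Q}=P^*$ into $\widetilde{Y}=Z'$, $\widetilde{Z}=Y'$. The paper in fact treats this last translation in a single sentence, whereas you (rightly) flag the identification $Q^*=Z'$ under the double anti-dual isomorphism as the only point requiring care; your outline of how to carry it out is sound.
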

\begin{proof} Using the property~(\ref{Riesz}) together with the assumption that $Y$ and $Z$ constitute a formally adjoint pair, we obtain
\begin{align}\label{E:symm-math-T}
&(v,Pu)_{\mathscr{H}_2}=(v,Yu)_{\mathscr{H}_2}=\langle E_2^{-1}v, Yu\rangle_{d_2} =\langle ZE_2^{-1}v, u\rangle_{d_1}\nonumber\\
&=(E_1ZE_2^{-1}v, u)_{\mathscr{H}_1}=(Qv, u)_{\mathscr{H}_1},
\end{align}
for all $u\in\dom(P)\subseteq \mathscr{H}_1$ and $v\in \dom(Q)\subseteq \mathscr{H}_2$.

This shows that~(\ref{E:sym-hyp}) is satisfied. Therefore,
$P=Y$ and $Q=E_1ZE_2^{-1}$ satisfy the hypotheses of lemma~\ref{L:abs-pdo-book}.

By definition~\ref{D-5}, the operators $Y$ and $Z$ form an essentially adjoint pair if and only if $\widetilde{Y}=Z'$ and $\widetilde{Z}=Y'$. This is equivalent to the fulfillment of the following two equalities: $\widetilde{P}=Q^*$ and $\widetilde{Q}=P^*$. This, in turn, is equivalent (in view of lemma~\ref{L:abs-pdo-book}) to the essential self-adjointness of $\mathcal{T}$ on $\dom (Y)\oplus E_2\dom (Z)$.
\end{proof}

\section{Proof of Theorem~\ref{T:main-1}}\label{S:pf-1}

To simplify the notations, in this section we will write $S^m$, $ES^{m}$, $\sob^{s}$, and $\ell^2$ instead of $S^m(\ZZ^n\times\TT^n)$, $ES^{m}(\ZZ^n\times\TT^n)$, $H^{s}(\ZZ^n)$, and $\ell^2(\ZZ^n)$. Additionally, $\widetilde{F}$ denotes the closure of an operator $F$, and $F'$ indicates the (operator) adjoint of $F$.

Let $a\in ES^{m}$ with $m\in\RR$, let $A$ and $A^{\dagger}$ be as in~(\ref{E:op-1}) and~(\ref{E:op-2}), and let $s_1,\,s_2\in\RR$.

Using the isomorphisms $H^{-s_j}(\ZZ^n)\cong (\overline{H^{s_j}(\ZZ^n)})^{'}$, $j=1,2$, established lemma~\ref{L:pairing-sob}, we see that
\begin{equation}\label{E:f-a-p-1}
  \langle f,Au\rangle_{d_2}=(f, Au)=(A^{\dagger}f, u)=\langle A^{\dagger}f,u\rangle_{d_1},
\end{equation}
for all $f\in \mathcal{S}(\ZZ^n)$ and $u\in \mathcal{S}(\ZZ^n)$, where in the second equality we the used the definition~(\ref{E:dual-1}). Here, $\langle\cdot,\cdot\rangle_{d_j}$ indicate the action of a functional belonging to $(\overline{H^{s_j}(\ZZ^n)})^{'}$ on a vector belonging to $H^{s_j}(\ZZ^n)$.

From~(\ref{E:f-a-p-1}) we see that $A$ and $A^{\dagger}$ constitute a formally adjoint pair in the sense of definition~\ref{D-f-5} above. For the remainder of the proof we will use the hypothesis $m>s_1-s_2$.

Recalling the definition of $\sob^{s}$, we see that operator $T_{\Lambda_{t}}\colon \sob^{s}\to \sob^{s-t}$, $t,\,s\in\RR$, is an isometric isomorphism with the inverse $(T_{\Lambda_{t}})^{-1}=T_{\Lambda_{-t}}$.  In particular, consider the isometric isomorphisms $E_j:=T_{\Lambda_{-2s_j}}\colon \sob^{-s_j}\to \sob^{s_j}$, $j=1,2$, with the inverses $E_{j}^{-1}=T_{\Lambda_{2s_j}}$.

As $A$ and $A^{\dagger}$ constitute a formally adjoint pair, it follows that the hypotheses of lemma~\ref{L:abs-pdo-book-1} are satisfied for $\mathscr{H}_{j}=\sob^{s_j}$, $Y=A$, and $Z=A^{\dagger}$, where $A$ and $A^{\dagger}$ are as in~(\ref{E:op-1}) and~(\ref{E:op-2}). Therefore, showing that $A$ and $A^{\dagger}$ form an essentially adjoint pair is equivalent to demonstrating that the operator $\mathcal{T}$ in~(\ref{E:matrix-op}), with $P=A$ and $Q=E_{1}A^{\dagger}E_{2}^{-1}$  is essentially self-adjoint on $\mathcal{S}(\ZZ^n)\oplus T_{\Lambda_{-2s_2}}\mathcal{S}(\ZZ^n)$.

We now transfer the problem to the context of the (more convenient) space $\ell^2\oplus \ell^2$. To this end, we define
\begin{equation}\label{E:b-t-b-0}
\mathcal{J}_{+}:=\left(
                                            \begin{array}{cc}
                                             T_{\Lambda_{s_1}}  & 0 \\
                                              0 & T_{\Lambda_{s_2}} \\
                                            \end{array}
                                          \right),\qquad \mathcal{J}_{-}:=\left(\begin{array}{cc}
                                             T_{\Lambda_{-s_1}}  & 0 \\
                                              0 & T_{\Lambda_{-s_2}} \\
                                            \end{array}
                                          \right),
\end{equation}
and observe that
\[
\mathcal{J}_{+}\colon \sob^{s_1}\oplus \sob^{s_2}\to \ell^2\oplus \ell^2
\]
is an isometric isomorphism.

As in~(\ref{E:def-true-adj}),
\[
(\mathcal{J}_{+})'\colon (\overline{\ell^2\oplus \ell^2})'\to (\overline{\sob^{s_1}\oplus \sob^{s_2}})'
\]
denotes the true adjoint of $\mathcal{J}_{+}$.

Using lemma~\ref{L:pairing-sob}, we identify $(\overline{\ell^2\oplus \ell^2})'$ with $\ell^2\oplus \ell^2$ and $(\overline{\sob^{s_1}\oplus \sob^{s_2}})'$ with $\sob^{-s_1}\oplus \sob^{-s_2}$. Having done this, we define (as in~(\ref{E:def-g-star}))
\[
(\mathcal{J}_{+})^{*}:=\mathcal{E}(\mathcal{J}_{+})',
\]
where
\[
\mathcal{E}:=\left(
                                            \begin{array}{cc}
                                             T_{\Lambda_{-2s_1}}  & 0 \\
                                              0 & T_{\Lambda_{-2s_2}} \\
                                            \end{array}
                                          \right).
\]
With these definitions in place, a computation shows that
\[
(\mathcal{J}_{+})^{*}=(\mathcal{J}_{+})^{-1}=\mathcal{J}_{-}.
\]
Therefore, the essential self-adjointness of $\mathcal{T}$, considered as an operator in $\sob^{s_1}\oplus \sob^{s_2}$ with $\dom (\mathcal{T})=\mathcal{S}(\ZZ^n)\oplus T_{\Lambda_{-2s_2}}\mathcal{S}(\ZZ^n)$, is equivalent to the essential self-adjointness of the operator
\[
\mathcal{G}:=\mathcal{J}_{+}\mathcal{T}\mathcal{J}_{-},
\]
considered as an operator in $\ell^2\oplus \ell^2$ with $\dom(\mathcal{G})=\mathcal{S}(\ZZ^n)\oplus \mathcal{S}(\ZZ^n)$.

Remembering that $E_j=T_{\Lambda_{-2s_j}}\colon \sob^{-s_j}\to \sob^{s_j}$, $j=1,2$, and computing
\begin{align*}
&\mathcal{G}=\left(                           \begin{array}{cc}
                                             T_{\Lambda_{s_1}}  & 0 \\
                                              0 & T_{\Lambda_{s_2}} \\
                                            \end{array}
                                          \right)\left(\begin{array}{cc}
                                             0  &  E_{1}A^{\dagger}E_{2}^{-1}\\
                                              A & 0 \\
                                            \end{array}\right)\left(\begin{array}{cc}
                                             T_{\Lambda_{-s_1}}  & 0 \\
                                              0 & T_{\Lambda_{-s_2}} \\
                                            \end{array}
                                          \right)\nonumber\\
                                          &=\left(\begin{array}{cc}
                                              0 & T_{\Lambda_{-s_1}}A^{\dagger}T_{\Lambda_{s_2}} \\
                                              T_{\Lambda_{s_2}}A T_{\Lambda_{-s_1}}& 0 \\
                                            \end{array}
                                          \right),
\end{align*}
we note that $\mathcal{G}$ can be written as
\begin{equation}\label{E:b-t-b-1}
\mathcal{G}=\left(\begin{array}{cc}
                                              0 &  B^{\dagger} \\
                                              B& 0 \\
                                            \end{array}
                                          \right),
\end{equation}
where $B:=T_{\Lambda_{s_2}}A T_{\Lambda_{-s_1}}$ is elliptic of order $r:=m+s_2-s_1$, as granted by lemma~\ref{L-1-hyp}. (Furthermore, note that $r>0$ in view of the hypothesis $m>s_1-s_2$.)

Using~(\ref{E:b-t-b-1}) and the definition of $(\cdot,\cdot)_{\ell^2\oplus \ell^2}$ from~(\ref{E-h-sum}), we see that $\mathcal{G}$ is a symmetric operator with $\dom(\mathcal{G})=\mathcal{S}(\ZZ^n)\oplus \mathcal{S}(\ZZ^n)$. In other words, we have $\mathcal{G}\subset \mathcal{G}^{*}$, where $\mathcal{G}^{*}$ is the true adjoint of $\mathcal{G}$. (Here, we used lemma~\ref{L:pairing-sob} to identify $(\overline{\ell^2\oplus \ell^2})'$ with $\ell^2\oplus \ell^2$.) Since (by an abstract fact) $\mathcal{G}^{*}$ is a closed operator, we have
\begin{equation}\label{E:temp-lr-1}
\widetilde{\mathcal{G}}\subset \mathcal{G}^{*},
\end{equation}
where $\widetilde{\mathcal{G}}$ is the closure of $\mathcal{G}$.

Our goal is to establish the equality $\widetilde{\mathcal{G}}=\mathcal{G}^{*}$.

To reach this goal, we first note that proposition~\ref{L-6} (applied to the block-operators $B$ and $B^{\dagger}$ in~(\ref{E:b-t-b-1}) of order $r=m+s_2-s_1>0$) yields
\begin{equation}\label{E:temp-lr-2}
H^{r}\oplus H^{r}\subset \dom(\widetilde{\mathcal{G}}).
\end{equation}(This step uses the assumption $r>0$,  ensuring that $H^{r}\oplus H^{r}\subset\ell^2\oplus\ell^2$.)

Furthermore, looking at~(\ref{E:temp-lr-1}) and~(\ref{E:temp-lr-2}), we see that it remains to show the inclusion
\begin{equation}\label{E:temp-lr-3}
\dom(\mathcal{G}^{*})\subset H^{r}\oplus H^{r}.
\end{equation}

Using the definition of $\mathcal{G}^{*}$ and looking at~(\ref{E:b-t-b-1}), the inclusion
$v=\left(
  \begin{array}{c}
    v_1 \\
    v_2 \\
  \end{array}
\right)
\in\dom(\mathcal{G}^{*})$ means the following: $v_j\in\ell^2$, $j=1,2$, and there exists $f=\left(
  \begin{array}{c}
    f_1 \\
    f_2 \\
  \end{array}
\right)\in \ell^2\oplus \ell^2$ such that
\begin{equation}\label{E:temp-lr-4}
(v,\mathcal{G}u)_{\ell^2\oplus\ell^2}=(f,u)_{\ell^2\oplus\ell^2},
\end{equation}
for all $u=\left(
  \begin{array}{c}
    u_1 \\
    u_2 \\
  \end{array}
\right)\in \dom(\mathcal{G})=\mathcal{S}(\ZZ^n)\oplus \mathcal{S}(\ZZ^n)$.

Using~(\ref{E:b-t-b-1}) and the definition of $(\cdot,\cdot)_{\ell^2\oplus \ell^2}$ from~(\ref{E-h-sum}), we can rewrite~(\ref{E:temp-lr-4}) as
\begin{equation}\label{E:temp-lr-5}
(v_1,B^{\dagger}u_2)+(v_2,Bu_1)=(f_1,u_1)+(f_2,u_2),
\end{equation}
where $(\cdot,\cdot)$ is the inner product in $\ell^2$.

As~(\ref{E:temp-lr-5}) holds for all $u_1\in \mathcal{S}(\ZZ^n)$ and all $u_2\in \mathcal{S}(\ZZ^n)$, putting $u_1=0$ first, and, after that, $u_2=0$, we get
\[
(v_1,B^{\dagger}u_2)=(f_2,u_2),\qquad (v_2,Bu_1)=(f_1,u_1),
\]
where $(\cdot,\cdot)$ is the inner product in $\ell^2$.

Keeping in mind that $v_j\in\ell^2$ and taking into account lemma~\ref{L:pairing-sob}, the latter two equalities lead to
\[
Bv_1=f_2,\qquad B^{\dagger}v_2=f_1.
\]
As $B$ and $B^{\dagger}$ are elliptic (of order $r=m+s_2-s_1$) and as $f_j\in\ell^2$, $j=1,2$, proposition~\ref{L-7} tells us that $v_j\in H^{r}$, $j=1,2$. This shows the inclusion~(\ref{E:temp-lr-3}). $\hfill\square$

\section{Proof of Corollary~\ref{C:main-1}}\label{pf-c-1} In this corollary $A$ is an operator $\ell^2(\ZZ^n)$ with domain $\mathcal{S}(\ZZ^n)$. Using theorem~\ref{T:main-1} with $s_1=s_2=0$, we infer that $A$ and $A^{\dagger}$ form an essentially adjoint pair. After identifying $(\overline{\ell^2(\ZZ^n)})'$ with $\ell^2(\ZZ^n)$, the latter means (by definition~\ref{D-5}) that $\widetilde{A}=(A^{\dagger})^{*}$ and $\widetilde{A^{\dagger}}=A^{*}$, where $\widetilde{G}$ indicates the closure of an operator $G$ and $G^{*}$ indicates the true adjoint of $G$.  Now the hypothesis of the corollary tells us $A=A^{\dagger}$, which in combination with the preceding sentence leads to  $\widetilde{A}=A^{*}$. This means that $A$ is essentially self-adjoint on $\dom(A)=\mathcal{S}(\ZZ^n)$. $\hfill\square$

\begin{remark}\label{R:dom-m} The proof of theorem~\ref{T:main-1} in the case $s_1=s_2=0$ tells us that under the hypotheses of corollary~\ref{C:main-1} we have $\dom (\widetilde{A})=\dom (A^{*})=H^{m}(\ZZ^n)$. This property was established earlier in proposition 3.18 of~\cite{DK-20}.
\end{remark}

\section{Two Applications}\label{SS-1-21}
In this section we illustrate the importance of corollary~\ref{C:main-1} for well-posedness of initial-value problems for evolution equations in $\ell^2(\ZZ^n)$ and for constructing an extended Hilbert scale on $\ZZ^n$ corresponding to an elliptic operator.

\subsection{Evolution Equations}\label{S:S-7-1}
Let $G$ be an operator (with domain $\dom(G)$) on a Hilbert space $\mathscr{H}$. The problem
\begin{equation}\label{E:ivp-1}
\frac{du}{dt}=Gu(t),\quad t\geq 0,\qquad u(0)=u_0,\qquad u_0\in\mathscr{H},
\end{equation}
is called \emph{an abstract Cauchy problem} (ACP) associated with $G$ and initial value $u_0\in\mathscr{H}$.

We say $u\in[0,\infty)\to \mathscr{H}$ is a \emph{solution of the ACP}~(\ref{E:ivp-1}) if $u(\cdot)$ is continuously differentiable, $u(t)\in\dom(G)$ for all $t\geq 0$ and $u$ satisfies~(\ref{E:ivp-1}).

We recall the following property from proposition II.6.2 in~\cite{eng-nag}:
\begin{itemize}
  \item [(P)] Assume that $G$ is a generator of a strongly continuous (not necessarily contractive) semigroup $\{V(t)\}_{t\geq 0}$ of bounded linear operators on a Hilbert space $\mathscr{H}$. Then for every $u_0\in \dom(G)$, the function
      \[
      t\mapsto u(t):=V(t)u_0,
      \]
is the unique solution to the ACP~(\ref{E:ivp-1}).
\end{itemize}

\bigskip

We now consider $\mathscr{H}=\ell^2(\ZZ^n)$ and an operator $A:=\textrm{Op}[a]$, such that $a\in ES^{m}(\ZZ^n\times \TT^n)$, with $m>0$, where $ES^{m}(\ZZ^n\times \TT^n)$ is as in definition~\ref{D-2}. Furthermore, we assume that $A$ is lower semi-bounded:
\begin{equation}\label{E:non-neg}
(Au,u)\geq C\|u\|^2,
\end{equation}
for all $u\in \mathcal{S}(\ZZ^n)$, where $C\in\mathbb{R}$,  $(\cdot,\cdot)$ is as in~(\ref{E:inner-l-2}) and $\|\cdot\|$ is the corresponding norm in $\ell^2(\ZZ^n)$.

It is well known that~(\ref{E:non-neg}) implies that $A=A^{\dagger}$. Thus, by corollary~\ref{C:main-1}, the closure $\widetilde{A}$ of $A|_{\mathcal{S}(\ZZ^n)}$ is a lower semi-bounded self-adjoint operator in $\ell^2(\ZZ^n)$ (with $\dom (\widetilde{A})=H^{m}(\ZZ^n)$, as indicated in remark~\ref{R:dom-m}. Therefore, by theorem II.3.15 from~\cite{eng-nag} the operator $-\widetilde{A}$ generates a strongly continuous (quasi-contractive) semigroup on $\ell^2(\ZZ^n)$. This, together with (P), yields the following property:

\bigskip

\noindent\emph{Under the above assumptions on $A:=\textrm{Op}[a]$, the ACP~(\ref{E:ivp-1}) with $\mathscr{H}=\ell^2(\ZZ^n)$, $G=-\widetilde{A}$, and $u_0\in \dom(\widetilde{A})=H^{m}(\ZZ^n)$ has a unique solution.}

\bigskip

\subsection{Extended Hilbert Scale on $\ZZ^n$}\label{SS:g-hilbert-a} We now use corollary~\ref{C:main-1} to construct an extended Hilbert scale on $\mathbb{Z}^n$ generated by an elliptic operator. It turns out that this (and related) scale(s) have attracted quite a bit of interest in the last two decades; see the monograph~\cite{MM-14}, the paper~\cite{MM-21}, and the references therein.

We begin with abstract terminology from section 2 of~\cite{MM-21}. Let $\mathscr{H}$ be a separable complex Hilbert space with inner product $(\cdot,\cdot)_{\mathscr{H}}$ and norm $\|\cdot\|_{\mathscr{H}}$. Let $A$ be a self-adjoint operator in $\mathscr{H}$ such that $(Au,u)_{\mathscr{H}}\geq\|u\|^2_{\mathscr{H}}$ for all $u\in\dom (A)$.

Applying spectral calculus we get the operator $A^{s}$ for each $s\in\RR$. Note that $\dom(A^s)$ is dense in $\mathscr{H}$; in particular, if $s\leq0$ we have  $\dom (A^s)=\mathscr{H}$. We define $H_{A}^{(s)}$ as the completion of $\dom(A^s)$ with respect to the inner product
\begin{equation*}
(u,v)_{s}:=(A^su,A^sv)_{\mathscr{H}}, \qquad u,v\in \dom(A^s).
\end{equation*}
It turns out that $H_{A}^{(s)}$ is a separable Hilbert space. In the sequel, we use the symbols $(\cdot,\cdot)_{s}$ and $\|\cdot\|_{s}$ to indicate the inner product and norm of $H_{A}^{(s)}$.

The family $\{H_{A}^{(s)}\colon s\in\RR\}$ is called \emph{Hilbert scale generated by $A$} or, in abbreviated form, \emph{$A$-scale}. According to section 2 of~\cite{MM-21}, for $s\geq 0$ we have $H_{A}^{(s)}=\dom (A^s)$, while for $s<0$ we have $H_{A}^{(s)}\supset \mathscr{H}$.

As indicated in section 2 of~\cite{MM-21}, for all $s_0<s_1$,  $[H_{A}^{(s_0)},H_{A}^{(s_1)}]$ is an admissible pair (in the sense of sections 1.1.1--1.1.2 of~\cite{MM-14}). In what follows, the term \emph{extended Hilbert scale generated by $A$} or
\emph{extended $A$-scale} refers to the set of all Hilbert spaces that serve as interpolation spaces with respect to pairs of the form $[H_{A}^{(s_0)},H_{A}^{(s_1)}]$, $s_0<s_1$. (Here we use the term \emph{interpolation space} in the sense of sections 1.1.1--1.1.2 of~\cite{MM-14}.)

Applying spectral calculus, for a Borel measurable function $\varphi\colon [1,\infty)\to(0,\infty)$,  we define a (positive self-adjoint) operator $\varphi(A)$ in $\mathscr{H}$. Furthermore, we define $H_{A}^{\varphi}$ as the completion of
$\dom(\varphi(A))$ with respect to the inner product
\begin{equation*}
(u,v)_{\varphi}:=(\varphi(A)u,\varphi(A)v)_{\mathscr{H}}, \qquad u,v\in \dom(\varphi(A)).
\end{equation*}
Referring to section 2 of~\cite{MM-21}, $H_{A}^{\varphi}$ is a separable Hilbert space. We use the symbols $(\cdot,\cdot)_{\varphi}$ and $\|\cdot\|_{\varphi}$ to denote  the inner product and norm of $H_{A}^{\varphi}$. Furthermore, as mentioned in section 2 of~\cite{MM-21}, we have $H_{A}^{\varphi}=\dom(\varphi(A))$ if and only if $0\notin\textrm{Spec}(\varphi(A))$.

\bigskip

We now switch to the setting of $\mathscr{H}=\ell^2(\ZZ^n)$. We begin by listing the assumptions on our operator $A:=\textrm{Op}[a]$:
\begin{enumerate}
\item [(H1)] $a\in ES^{1}(\ZZ^n\times \TT^n)$, where $ES^{1}(\ZZ^n\times \TT^n)$ is as in definition~\ref{D-2};

\item [(H2)] $(Au,u)\geq \|u\|^2$, for all $u\in \mathcal{S}(\ZZ^n)$, where $(\cdot,\cdot)$ is as in~(\ref{E:inner-l-2}) and $\|\cdot\|$ is the corresponding norm in $\ell^2(\ZZ^n)$.
\end{enumerate}

Under the hypotheses (H1)--(H2), $A|_{\mathcal{S}(\ZZ^n)}$ is a formally self-adjoint operator whose symbol $a$ belongs to the class $ES^{1}(\ZZ^n\times \TT^n)$. Thus, by corollary~\ref{C:main-1}, $A|_{\mathcal{S}(\ZZ^n)}$ is an essentially self-adjoint operator in $\ell^2(\ZZ^n)$.  Moreover, remark~\ref{R:dom-m} tells us that the domain of the self-adjoint closure of $A$ is the space $H^{1}(\ZZ^n)$. To keep our notations simpler, for the remainder of this section, we denote the self-adjoint closure of $A|_{\mathcal{S}(\ZZ^n)}$ again by $A$.

Specializing the abstract construction from the beginning of section~\ref{SS:g-hilbert-a} to $\mathscr{H}=\ell^2(\ZZ^n)$, for a Borel function
$\varphi\colon [1,\infty)\to(0,\infty)$ we define the space $H_{A}^{\varphi}(\ZZ^n)$ generated by an operator $A$ satisfying (H1)--(H2).

Following section 2.4.1 in~\cite{MM-14}, we say that a function $\varphi\colon [1,\infty)\to (0,\infty)$ is \emph{$RO$-varying at infinity} if
\begin{enumerate}
  \item [(i)] $\varphi$ is Borel measurable
  \item [(ii)] there exist numbers $a>1$ and $c\geq 1$ (depending on $\varphi$) such that
  \begin{equation}\label{E:RO-1}
    c^{-1}\leq\frac{\varphi(\lambda t)}{\varphi(t)}\leq c, \qquad\textrm{for all }t\geq 1,\,\,\lambda\in [1,a].
  \end{equation}
\end{enumerate}
In the sequel, the inclusion $\varphi\in RO$ means that a function $\varphi\colon [1,\infty)\to (0,\infty)$ is $RO$-varying at infinity.

\bigskip

We end this section with a property established in theorem 2.7 of~\cite{Mil-24} (analogue of theorem 5.1 in~\cite{MM-21} for the $\RR^n$-setting):

\bigskip

\noindent \emph{Assume that $\varphi\in RO$. Additionally, assume that $A:=\textrm{Op}[a]$ is an operator satisfying (H1)--(H2). Then, up to norm equivalence, we have
\begin{equation}\label{E:thm-7}
H^{\varphi}_{A}(\ZZ^n)=H^{\varphi}(\ZZ^n),
\end{equation}
where $H^{\varphi}(\ZZ^n)$ is as in~(\ref{E:def-sob}) with $\Lambda_s(k)$ replaced by
\[
\varphi\left((1+|k|^2)^{1/2}\right).
\]
}

\bigskip

\begin{remark} The condition $a\in ES^{1}(\ZZ^n\times \TT^n)$ in (H1) can be replaced by $a\in ES^{m}(\ZZ^n\times \TT^n)$, with $m>0$. In this case, the following variant of~(\ref{E:thm-7}) holds: $H^{\varphi}_{A}(\ZZ^n)=H^{\varphi_{m}}(\ZZ^n)$, where $\varphi_{m}(t):=\varphi(t^m)$, $t\geq 1$.
\end{remark}

\appendix
\section{Anti-duality between discrete Sobolev spaces}\label{S:App}
In this section we adapt the proof of lemma 4.4.4 in~\cite{BC-09} to discrete Sobolev spaces.

We begin with a brief review of terminology. Denote by $\overline{V}$ the \emph{conjugate} of a (complex) vector space $V$. Viewed as real vector spaces, $\overline{V}$ and $V$ are the same. The only difference is that the multiplication of a vector $v\in \overline{V}$ by a scalar $z\in\CC$ is defined as $\overline{z}v$, where $\overline{z}$ is the conjugate of $z$. The space of linear (respectively, anti-linear) functionals on $V$ will be denoted by $V^{'}$ (respectively, $(\overline{V})^{'}$).

Let $V_1$ and $V_2$ be two topological (complex) vector spaces. By a  \emph{pairing}  of $V_1$ and $V_2$ we mean a continuous sesquilinear  map $B\colon V_1\times V_2\to \CC$. (Here, ``sesquilinear" means linear in the first and anti-linear in the second slot.)  The pairing $B$ gives rise to continuous linear maps $\tau\colon V_1\to (\overline{V_2})^{'}$ and $\kappa\colon \overline{V_2}\to V_{1}^{'}$ as follows:
\begin{equation*}
\tau (u):=B(u,\cdot)\,\qquad \kappa(v):=B(\cdot, v).
\end{equation*}
The pairing $B$ is said to be \emph{perfect} if $\tau$ and $\kappa$ are (linear) isomorphisms.

\begin{lemma}\label{L:pairing-sob} Let $s\in\RR$. Then the $\ell^2$-inner product~(\ref{E:inner-l-2}), initially considered on $\mathcal{S}(\ZZ^n)\times \mathcal{S}(\ZZ^n)$, extends to a (continuous sesquilinear) pairing
\begin{equation*}
(\cdot,\cdot)\colon H^{s}(\ZZ^n)\times H^{-s}(\ZZ^n)\to \mathbb{C}.
\end{equation*}
The pairing $(\cdot,\cdot)$ is perfect and induces isometric (linear) isomorphisms $\overline{H^{-s}(\ZZ^n)}\cong(H^{s}(\ZZ^n))^{'}$ and $H^{s}(\ZZ^n)\cong(\overline{H^{-s}(\ZZ^n)})^{'}$.
\end{lemma}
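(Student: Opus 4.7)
The plan is to reduce the entire statement to the Riesz representation theorem in $\ell^2(\ZZ^n)$ by exploiting the fact that $T_{\Lambda_s}$ is simply pointwise multiplication by $\Lambda_s(k)$. Since the symbol $\Lambda_s(k)=(1+|k|^2)^{s/2}$ depends only on $k$, combining the definition~(\ref{E:op-a}) with the inversion formula~(\ref{E:inv-ft}) shows that $(T_{\Lambda_s}u)(k)=\Lambda_s(k)u(k)$ for any $u$. Because $\Lambda_s$ is real-valued and $\Lambda_s\Lambda_{-s}\equiv 1$, for $u,v\in\mathcal{S}(\ZZ^n)$ this yields the key identity
\begin{equation*}
(u,v)=\sum_{k\in\ZZ^n}\Lambda_s(k)u(k)\,\overline{\Lambda_{-s}(k)v(k)}=(T_{\Lambda_s}u,\,T_{\Lambda_{-s}}v).
\end{equation*}
Cauchy--Schwarz then gives $|(u,v)|\le\|u\|_{\sob^s}\|v\|_{\sob^{-s}}$ on Schwartz functions, so the density of $\mathcal{S}(\ZZ^n)$ in $\sob^{\pm s}(\ZZ^n)$ extends $(\cdot,\cdot)$ uniquely to a continuous sesquilinear form on $\sob^s(\ZZ^n)\times\sob^{-s}(\ZZ^n)$. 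The displayed identity persists on the full product, since by the very definition of the Sobolev norms, $T_{\Lambda_s}\colon\sob^s(\ZZ^n)\to\ell^2(\ZZ^n)$ and $T_{\Lambda_{-s}}\colon\sob^{-s}(\ZZ^n)\to\ell^2(\ZZ^n)$ are isometric isomorphisms with inverses $T_{\Lambda_{-s}}$ and $T_{\Lambda_s}$, respectively.

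To establish perfectness I will analyze the map $\tau\colon\sob^s(\ZZ^n)\to(\overline{\sob^{-s}(\ZZ^n)})'$ given by $\tau(u):=(u,\cdot)$ and show it is an isometric isomorphism; the companion isomorphism $\overline{\sob^{-s}(\ZZ^n)}\cong(\sob^s(\ZZ^n))'$ will then follow from the same argument with the roles of $s$ and $-s$ interchanged. Injectivity is immediate: if $\tau(u)=0$, then in particular $(u,v)=0$ for every $v\in\mathcal{S}(\ZZ^n)$, which forces $u=0$ as a tempered distribution and hence in $\sob^s(\ZZ^n)$. For surjectivity and the norm equality, given $f\in(\overline{\sob^{-s}(\ZZ^n)})'$ I would form the composition $g:=f\circ T_{\Lambda_s}$, which is a continuous anti-linear functional on $\ell^2(\ZZ^n)$ with $\|g\|=\|f\|$ because $T_{\Lambda_s}\colon\ell^2(\ZZ^n)\to\sob^{-s}(\ZZ^n)$ is an isometric isomorphism. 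The Riesz theorem in $\ell^2(\ZZ^n)$ then supplies a unique $h\in\ell^2(\ZZ^n)$ with $g(w)=(h,w)$ for all $w\in\ell^2(\ZZ^n)$ and $\|h\|=\|g\|$.

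Finally, setting $u:=T_{\Lambda_{-s}}h\in\sob^s(\ZZ^n)$, I would verify $\tau(u)=f$ and $\|u\|_{\sob^s}=\|f\|$ by computing
\begin{equation*}
(u,v)=(T_{\Lambda_s}u,\,T_{\Lambda_{-s}}v)=(h,\,T_{\Lambda_{-s}}v)=g(T_{\Lambda_{-s}}v)=f(T_{\Lambda_s}T_{\Lambda_{-s}}v)=f(v)
\end{equation*}
for every $v\in\sob^{-s}(\ZZ^n)$, together with $\|u\|_{\sob^s}=\|T_{\Lambda_s}u\|=\|h\|=\|f\|$. No step of this plan presents a genuine obstacle; the only point requiring care is the careful bookkeeping of the isometries $T_{\Lambda_{\pm s}}$ between $\ell^2(\ZZ^n)$ and the various Sobolev spaces, and the one substantive observation is that $T_{\Lambda_s}$ coincides with the multiplication operator by $\Lambda_s(k)$, which is where the general pseudo-differential calculus collapses to something elementary.
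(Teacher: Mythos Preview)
Your proof is correct and shares the paper's starting point---the multiplication identity $(u,v)=(T_{\Lambda_s}u,T_{\Lambda_{-s}}v)$ followed by Cauchy--Schwarz and density---but the two arguments diverge at the perfectness step. The paper first proves that $\tau$ is an \emph{isometry} by writing
\[
\|u\|_{H^s}=\sup\{|(u,v)|:v\in\mathcal{S}(\ZZ^n),\ \|v\|_{H^{-s}}=1\},
\]
then argues similarly that $\kappa$ is an isometry, and finally obtains surjectivity of $\tau$ abstractly: injectivity of $\kappa$ forces the range of $\tau$ to be dense, and an isometry with dense range is onto. You instead bypass the sup formula entirely and establish surjectivity constructively, pulling an arbitrary functional on $H^{-s}$ back to $\ell^2$ via $T_{\Lambda_s}$, applying Riesz there, and pushing the representing vector forward; the isometry property falls out of the same computation. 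Your route is more explicit and avoids the soft density-of-range argument, while the paper's route is slightly more in the spirit of general duality pairings (it would work verbatim in settings where no convenient model Hilbert space like $\ell^2$ is available). Both are clean; neither is materially shorter.
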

\begin{proof}
Let $(\cdot,\cdot)$ be as in~(\ref{E:inner-l-2}) and let $\Lambda_{s}$ be as in~(\ref{sob-mult}). For all $u,v\in \mathcal{S}(\ZZ^n)$ we have
\begin{equation}\label{E:a-1}
|(u,v)|=|(\Lambda_{s}u,\Lambda_{-s}v)|\leq \|\Lambda_{s}u\|\|\Lambda_{-s}v\|=\|u\|_{H^{s}}\|v\|_{H^{-s}},
\end{equation}
where we used Cauchy--Schwarz inequality in $\ell^2(\ZZ^n)$. The last equality is justified because
\begin{equation*}
\|\Lambda_{s}u\|=\|T_{\Lambda_s}u\|, \qquad \|\Lambda_{-s}v\|=\|T_{\Lambda_{-s}}v\|,
\end{equation*}
where the last two formulas follow from the definition~(\ref{E:op-a}) and the inversion formula~(\ref{E:inv-ft}).

As $\mathcal{S}(\ZZ^n)$ is dense in the spaces $H^{\pm s}(\ZZ^n)$ (see lemma 3.16 in~\cite{DK-20}), from~(\ref{E:a-1}) we infer that the $\ell^2$-inner product extends to a (continuous sesquilinear) pairing $(\cdot,\cdot)\colon H^{s}(\ZZ^n)\times H^{-s}(\ZZ^n)\to \mathbb{C}$.

Additionally, from~(\ref{E:a-1}) we obtain
\begin{equation}\label{E:a-2}
\|u\|_{H^{s}}=\sup\{|(u,v)|\colon v\in \mathcal{S}(\ZZ^n) \textrm{ and } \|v\|_{H^{-s}}=1\}.
\end{equation}
Using the density of $\mathcal{S}(\ZZ^n)$ in the space $H^{-s}(\ZZ^n)$, observe that the right hand side of~(\ref{E:a-2}) is equal to the norm of the functional $\tau(u)\in(\overline{H^{-s}(\ZZ^n)})^{'}$ corresponding to $u\in H^{s}(\ZZ^n)$ via $\tau(u):=(u,\cdot)$.  Therefore, $\tau\colon H^{s}(\ZZ^n)\to  (\overline{H^{-s}(\ZZ^n)})^{'}$ is an isometry.

In the same way, using~(\ref{E:a-1}) we can show that the map $\kappa\colon \overline{H^{-s}(\ZZ^n)}\to (H^{s}(\ZZ^n))^{'}$ defined as
$\kappa(v):=(\cdot,v)$ is an isometry.

In particular, the maps $\tau$ and $\kappa$ are injective. From the injectivity of $\kappa$ we infer that the range of $\tau$ is dense in $(\overline{H^{-s}(\ZZ^n)})^{'}$. As $\tau$ is an isometry, it follows that $\tau$ is surjective. Therefore, $\tau$ is an isometric isomorphism. In the same way, we can show that $\kappa$ is an isometric isomorphism.
\end{proof}

\end{document}